\newtheorem{thm}{Theorem}[section]
\newtheorem{lemma}[thm]{Lemma}
\newtheorem{cor}[thm]{Corollary}
\newtheorem{claim}[thm]{Claim}
\theoremstyle{definition}
\newtheorem{defn}[thm]{Definition}
\theoremstyle{definition}
\newtheorem{ex}[thm]{Example}
\theoremstyle{definition}
\theoremstyle{definition}
\newtheorem{question}[thm]{Question}
\theoremstyle{definition}
\theoremstyle{definition}
\newtheorem{alg}[thm]{Algorithm}
\theoremstyle{remark}
\newtheorem{remark}[thm]{Remark}
\newcommand{\ord}{\mathrm{ord}}
\begin{document}

\title{Restricted Stirling and Lah number matrices and their inverses}
\author{John Engbers\thanks{Department of Mathematics, Statistics and Computer Science, Marquette University,
Milwaukee WI. Research supported by the Simons Foundation grant 524418.}\and David Galvin\thanks{Department of Mathematics, University of Notre Dame, Notre Dame IN. Research supported by the Simons Foundation grant 360240 and by the National Security Agency grant NSA H98230-13-1-0248.} \and Cliff Smyth\thanks{Department of Mathematics and Statistics, University of North Carolina at Greensboro, Greensboro NC. Research supported by the Simons Foundation grant 360468.}}
\date{\today}

\maketitle

\begin{abstract}
Given $R \subseteq \mathbb{N}$ let ${n \brace k}_R$, ${n \brack k}_R$, and $L(n,k)_R$ count the number of ways of partitioning the set $[n]:=\{1,2, \ldots,n\}$ into $k$ non-empty subsets, cycles and lists, respectively, with each block having cardinality in $R$. We refer to these as the $R$-restricted Stirling numbers of the second and first kind and the $R$-restricted Lah numbers, respectively. Note that the classical Stirling numbers of the second kind and first kind, and Lah numbers are ${n \brace k} = {n \brace k}_{\mathbb{N}}$, ${n \brack k} = {n \brack k}_{\mathbb{N}} $ and $L(n,k) = L(n,k)_{\mathbb{N}}$, respectively.

It is well-known that the infinite matrices $[{n \brace k}]_{n,k \geq 1}$, $[{n \brack k}]_{n,k \geq 1}$ and $[L(n,k)]_{n,k \geq 1}$ have inverses $[(-1)^{n-k}{n \brack k}]_{n,k \geq 1}$, $[(-1)^{n-k} {n \brace k}]_{n,k \geq 1}$ and $[(-1)^{n-k} L(n,k)]_{n,k \geq 1}$ respectively. The inverse matrices $[{n \brace k}_R]^{-1}_{n,k \geq 1}$, $[{n \brack k}_R]^{-1}_{n,k \geq 1}$ and $[L(n,k)_R]^{-1}_{n,k \geq 1}$ exist and have integer entries if and only if $1 \in R$.  We express each entry of each of these matrices as the difference between the cardinalities of two explicitly defined families of labeled forests. In particular the entries of $[ {n \brace k}_{[r]}]^{-1}_{n,k \geq 1}$ have combinatorial interpretations, affirmatively answering a question of Choi, Long, Ng and Smith from 2006.

If we have $1,2 \in R$ and if for all $n \in R$ with $n$ odd and $n \geq 3$, we have $n \pm 1 \in R$, we  additionally show that each entry of $[{n \brace k}_R]^{-1}_{n,k \geq 1}$, $[{n \brack k}_R]^{-1}_{n,k \geq 1}$ and $[L(n,k)_R]^{-1}_{n,k \geq 1}$ is up to an explicit sign the cardinality of a single explicitly defined family of labeled forests. With $R$ as before we also do the same for restriction sets of the form $R(d) = \{ d(r-1) + 1 : r \in R\}$ for all $d \geq 1$. Our results also provide combinatorial interpretations of the $k$th Whitney numbers of the first and second kinds of $\Pi_n^{1,d}$, the poset of partitions of $[n]$ that have each part size congruent to $1$ mod $d$.
\end{abstract}

\section{Introduction} \label{sec-intro-AAA}

For all integers $n,k \geq 1$, let ${n \brace k}$, ${n \brack k}$, and $L(n,k)$ be the classical Stirling numbers of the second and first kinds, and Lah numbers, respectively.  These numbers are defined as follows: ${n \brace k}$ is the number of partitions of $[n] := \{1,2, \ldots, n\}$ into $k$ non-empty subsets, ${n \brack k}$ is the number of partitions of $[n]$ into $k$ non-empty cyclically ordered sets, i.e.\!\! cycles, and $L(n,k)$ is the number of partitions of $[n]$ into $k$ non-empty linearly ordered sets, i.e.\!\! lists.  All of our partitions will be unordered unless we specify otherwise. Let $S_2 := [ {n \brace k} ]_{n,k \geq 1}$, $S_1 := [ {n \brack k}]_{n,k \geq 1}$, and $L := [ L(n,k) ]_{n,k \geq 1}$ be infinite matrices with rows and columns indexed by the natural numbers $\mathbb{N} := \{1, 2, \ldots\}$. In this notation $n$ is the row index and $k$ is the column index.  It is well-known that $S_2^{-1} = [ (-1)^{n-k} {n \brack k} ]_{n,k \geq 1}$, $S_1^{-1} = [ (-1)^{n-k} {n \brace k} ]_{n,k \geq 1}$ and $L^{-1} = [ (-1)^{n-k} L(n,k) ]_{n,k \geq 1}$.  In particular, each entry of each inverse matrix has, up to sign, a combinatorial interpretation.

We consider the following generalizations of Stirling and Lah numbers.  
\begin{defn} \label{def:restricted-Stirling-numbers}
For $R \subseteq \mathbb{N}$ the \emph{$R$-restricted Stirling number of the second kind}, ${n \brace k}_R$, is the number of partitions of $[n]$ into $k$ non-empty subsets such that the cardinality of each subset is restricted to lie in $R$. Analogously, the \emph{$R$-restricted Stirling numbers of the first kind} ${n \brack k}_R$ and \emph{$R$-restricted Lah numbers} $L(n,k)_R$ are the numbers of partitions of $[n]$ into $k$ cycles and lists, respectively, with cardinalities restricted to lie in $R$.
\end{defn}
Note that we recover the classical Stirling numbers of both kinds and the Lah numbers by taking $R$ to be ${\mathbb N}$ (e.g. ${n \brace k}_{\mathbb N} = {n \brace k}$ etc.).

Various instances of restricted numbers have appeared in the literature. Comtet \cite[page 222]{Comtet} introduced {\em $r$-associated Stirling numbers of the second kind}, ${n \brace k}_R$ with ${R}=\{r, r+1, r+2, \ldots\}$, and obtained recurrence relations and generating functions for them. Belbachir and Bousbaa \cite{BelbachirBousbaa} studied \emph{$r$-associated Lah numbers}, $L(n,k)_R$ also with $R = \{r, r+1, r+2, \ldots\}$.  Choi and Smith \cite{ChoiSmith} considered {\em $r$-restricted Stirling numbers of the second kind}, ${n \brace k}_R$ with $R=[r]$.

We extend the classical results on the inverses of Stirling and Lah number matrices to find combinatorial formulas for the inverses of $R$-restricted Stirling and Lah number matrices whenever the inverses exist, i.e., whenever $1 \in R$.

\begin{defn} \label{def:inverse-numbers}
Denote by ${n \brace k}^{-1}_R$ (${n \brack k}^{-1}_R$, $L(n,k)^{-1}_R$) the entry in the $n$th row and $k$th column of the matrix $[ {n \brace k}_R]^{-1}_{n,k \geq 1}$ ($[ {n \brack k}_R ]^{-1}_{n,k \geq 1}$, $[L(n,k)_R]^{-1}_{n,k \geq 1}$, respectively), when the inverse matrix exists.  We refer to ${n \brace k}^{-1}_R$ as the \emph{inverse $R$-restricted Stirling number of the second kind}, ${n \brack k}^{-1}_R$ as the \emph{inverse $R$-restricted Stirling number of the first kind}, and $L(n,k)^{-1}_R$ as the \emph{inverse $R$-restricted Lah number}.
\end{defn}
Our first result (Theorem \ref{thm:forest-difference}) is that for all $R \subseteq \mathbb{N}$ with $1 \in R$, ${ n \brace k}^{-1}_R$, ${ n \brack k}^{-1}_R$, and $L(n,k)^{-1}_R$ can each be expressed as the difference between the cardinalities of two explicitly defined sets of forests. 

If $R$ has more structure, we can say more.
\begin{defn} \label{def:no-exposed-odds}
Say that $R \subseteq \mathbb{N}$ {\em has no exposed odds} if it has the following properties: 
\begin{enumerate}
\item if $1 \in R$ then $2 \in R$ and
\item if $n$ is odd, $n \geq 3$, and $n \in R$ then $n-1, n+1 \in R$. 
\end{enumerate}
\end{defn}
For $d\geq 1$ and $R \subseteq \mathbb{N}$ set $R(d) := \{ d(n-1)+1: n \in R\}$.
We view $R(d)$ as the set $R$ ``stretched'' along the arithmetic progression $\{1, d+1, 2d+1, \ldots\}$. Our main set of results (Theorems \ref{thm:single-forest} and \ref{thm:single-forest-stretched}) is that, for all $R \subseteq \mathbb{N}$ with $1 \in R$ and with no exposed odds, and for all $d \geq 1$, each of
${ n \brace k}^{-1}_{R}$, ${n \brack k}^{-1}_{R}$, $L(n,k)^{-1}_{R}$, ${ n \brace k}^{-1}_{R(d)}$, ${n \brack k}^{-1}_{R(d)}$, and $L(n,k)^{-1}_{R(d)}$ can be expressed, up to an explicit sign, as the cardinality of a
single explicitly defined set of forests.

In \cite{ChoiLongNgSmith} Choi, Long, Ng and Smith note that ${n \brace k}^{-1}_{[2]}$ is a Bessel number \cite[A100861]{sloan} and has many combinatorial interpretations. For example, $(-1)^{n-k} {n \brace k}^{-1}_{[2]}$ counts the number of size $n-k$ matchings of the complete graph $K_{2n-1-k}$ \cite{ChoiSmith2}.  They asked if ${n \brace k}^{-1}_{[r]}$ has a combinatorial interpretation for $r > 2$, and observed that an anomalous sign behavior in ${n \brace k}^{-1}_{[3]}$
presents an obstacle to any such interpretations. 

But in fact our results provide  such combinatorial interpretations, and these are particularly nice whenever $r$ is even; see Corollary \ref{cor:special-cases} (Part 1) below.

We give below, in Corollary \ref{cor:special-cases}, some illustrative special cases of the results in our paper. We also give some applications to calculating the Whitney numbers of a certain subposet of the partition lattice (Theorem \ref{thm:Whitney}).

Recall that a plane tree is a rooted tree in which the set of children of each vertex of the tree are given a linear ordering from left to right.  If the leaves of a tree are labeled with integers we extend that labeling to other vertices $v$ by setting $\ell(v)$ to be the maximum of the labels of the leaves descended from $v$.  Let ${\cal H}(n,k)$ be the set of forests consisting of an unordered collection of $k$ plane rooted trees: (i) with $n$ leaves in total (an isolated root is considered a leaf) (ii) with all non-leaves having at least two children and (iii) with the leaves labeled with the integers $1$ through $n$ in such a way that $\ell(v)$ increases from left to right across each set of siblings.

\begin{cor} \label{cor:special-cases} 

The following are special cases of Definition \ref{def:R-good}, Claim \ref{claim:3inR}, and Theorems \ref{thm:single-forest} and \ref{thm:single-forest-stretched}.

\begin{enumerate}

\item Let $r \geq 1$. The number ${ n \brace k}^{-1}_{\{1,2, \ldots, 2r\}}$ is $(-1)^{n-k}$ times the number of forests in ${\cal H}(n,k)$ in which each vertex $v$ has $0$, $2$, or $2r$ children unless $v$ is the left-most child of a vertex with two children, in which case it has $0$ or $2r$ children.

\item Let $r \geq 1$ and $d \geq 2$. If $n \equiv k \pmod{d}$, then ${ n \brace k}^{-1}_{\{1,d+1, 2d+1 , \ldots, 1 + (2r-1)d\}}$ is $(-1)^{(n-k)/d}$ times the number of forests in ${\cal H}(n,k)$ in which each vertex $v$ has $0$, $d+1$ or $1 + (2r-1)d$ children unless $v$ is the left-most child of a vertex with $d+1$ children, in which case it has $0$ or $1 + (2r-1)d$ children. If $n \not \equiv k \pmod{d}$, then the number is $0$.

\item Let $d \geq 1$.  If $n \equiv k \pmod{d}$, then ${n \brace k}^{-1}_{\{1, d+1, 2d+1, \ldots\}}$ is $(-1)^{(n-k)/d}$ times the number of forests in ${\cal H}(n,k)$ in which each vertex has $0$ or $d+1$ children and in which left-most children are always leaves.  If $n \not \equiv k \pmod{d}$, then the number is $0$.

\end{enumerate}

\end{cor}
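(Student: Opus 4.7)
The plan is to verify, for each of the three parts, that the specified restriction set satisfies the hypotheses of the relevant theorem (Theorem \ref{thm:single-forest} for Part 1, Theorem \ref{thm:single-forest-stretched} for Parts 2 and 3), and then to unpack the $R$-good branching constraints of Definition \ref{def:R-good} into the explicit descriptions given in the corollary.

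For Part 1, I would first check that $R = [2r]$ has no exposed odds: we have $1,2 \in R$, and every odd $n$ with $3 \leq n \leq 2r-1$ has $n \pm 1 \in [2r]$. Theorem \ref{thm:single-forest} then gives that $(-1)^{n-k}{n \brace k}^{-1}_{[2r]}$ counts the $R$-good forests in $\mathcal{H}(n,k)$. I would then translate Definition \ref{def:R-good} in this setting: a generic non-leaf must have a number of children lying in $R$, but once the no-exposed-odds collapsing is applied, the admissible ``generic'' values are only the endpoints $2$ and $2r$ (with $0$ always allowed for leaves), while the ``exceptional'' vertices, namely left-most children of a parent branching $2$ ways, must be leaves or branch maximally (i.e.\ have $0$ or $2r$ children). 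This step requires a careful cross-reference with Definition \ref{def:R-good} to confirm the exceptional vertex rule matches the one in the corollary.

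For Parts 2 and 3, I would recognize the restriction set as a ``stretching'' of a set from Part 1: in Part 2 we have $\{1,d+1,\ldots,1+(2r-1)d\} = [2r](d)$, and in Part 3 we have $\{1,d+1,2d+1,\ldots\} = \mathbb{N}(d)$. Both $[2r]$ and $\mathbb{N}$ have no exposed odds, so Theorem \ref{thm:single-forest-stretched} applies, yielding the factor $(-1)^{(n-k)/d}$ together with the constraint that only forests whose parameters satisfy $n \equiv k \pmod{d}$ contribute (since every non-leaf in such a forest has a number of children congruent to $1$ modulo $d$, forcing $n - k$ to be a multiple of $d$). The admissible branchings are obtained by applying the same ``collapse to the endpoints'' phenomenon as in Part 1 to the stretched set, so that a generic non-leaf has $0$, $d+1$, or $1+(2r-1)d$ children; exceptional left-most children of a $(d+1)$-branching parent are restricted to $0$ or $1+(2r-1)d$ children. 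For Part 3 the unbounded set $\mathbb{N}(d)$ has no maximal element, so the ``$1+(2r-1)d$'' option simply disappears, leaving the rule that left-most children must be leaves and all other non-leaves branch into exactly $d+1$ children.

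The main obstacle I expect is the bookkeeping in Part 1: correctly identifying which vertices are ``exceptional'' under Definition \ref{def:R-good} and verifying that the no-exposed-odds collapsing actually forces all intermediate even values $4, 6, \ldots, 2r-2$ (and the odd values $3,5,\ldots,2r-1$) to be excluded from the admissible branchings at both generic and exceptional vertices. Once this is accomplished for Part 1, Parts 2 and 3 follow by the mechanical substitution $r \mapsto 1+(r-1)d$ prescribed by the stretching operation, together with the sign and parity adjustments built into Theorem \ref{thm:single-forest-stretched}, so the remaining verifications are essentially a translation exercise rather than a new combinatorial argument.
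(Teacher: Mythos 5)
Your overall strategy --- check that $[2r]$ and $\mathbb{N}$ have no exposed odds, recognize the restriction sets in Parts 2 and 3 as $[2r](d)$ and $\mathbb{N}(d)$, invoke Theorems \ref{thm:single-forest} and \ref{thm:single-forest-stretched}, and then unpack Definition \ref{def:R-good} --- is exactly what the paper intends (its proof is a one-line ``follows from the definitions and theorems''), and your treatment of Part 3, of the sign $(-1)^{(n-k)/d}$, and of the vanishing when $d \nmid (n-k)$ is correct. The genuine gap is in the translation step for Parts 1 and 2, precisely the step you flag as the main obstacle. For $R=[2r]$ the decomposition into maximal intervals is the single interval $[1,2r]$, so $a(R)=\emptyset$ (the left endpoint $1$ is excluded by definition) while $b(R)=\{2r\}$. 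Definition \ref{def:R-good} therefore allows a \emph{generic} vertex --- one without $2$-left-odd ancestry, equivalently by Claim \ref{claim:3inR} one that is not the left-most child of a degree-$2$ vertex --- to have degree only in $\{0,2\}\cup a(R)=\{0,2\}$; the maximal value $2r$ lies in $b(R)$ and is available \emph{only} at exceptional vertices. Your reading, in which the admissible generic values are ``the endpoints $2$ and $2r$,'' does not follow from the definition, and it changes the count. For $r=2$, $n=4$, $k=1$: a direct matrix inversion gives ${4 \brace 1}^{-1}_{[4]}=-6$, and the $[4]$-good increasingly ordered trees are exactly the $6$ increasing labelings of the binary caterpillar whose left-most children are all leaves; admitting a generic root of degree $4$ would add the star on four leaves (one increasing labeling) and give $7$. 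One can also see directly that the star is not fixed by Algorithm \ref{alg:algorithm}: since $4\notin a([4])$, step 1(b) uncontracts at the root, so the star is cancelled in the signed sum rather than counted.

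In fairness, the printed wording of Corollary \ref{cor:special-cases}(1)--(2) (and of the example following Definition \ref{def:R-good} for $R=[r]$ with $r$ even) reads the same way you read it, so your proposal matches the statement but not the definition from which the statement is supposed to follow. A correct derivation must use the tighter condition: every non-leaf has $2$ (respectively $d+1$) children unless it is the left-most child of a vertex with $2$ (respectively $d+1$) children, in which case it is a leaf or has $2r$ (respectively $1+(2r-1)d$) children; degree-$2r$ vertices can occur only in those exceptional positions. Part 3 escapes this issue only because there $b(\mathbb{N})=\emptyset$ as well, so the generic and exceptional degree sets you wrote down happen to coincide with the definition's. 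The remaining ingredients of your outline --- the use of Claim \ref{claim:3inR} to replace ``$s_d(2)$-left-odd ancestry'' by ``left-most child of an $s_d(2)$-degree vertex'' (valid since $3\in[2r]$ for $r\ge 2$, with the $r=1$ case handled separately), and the congruence argument for $n\equiv k \pmod d$ --- are fine.
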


Suppose $P$ is a finite ranked poset with unique minimal element $0$.  For all $k \geq 0$, the {\em $k$th Whitney number of the second kind}, $W_k(P)$, is the number of elements of $P$ of rank $k$ and the {\em $k$th Whitney number of the first kind}, $w_k(P)$, is given by $w_k(P) = \sum_{x} \mu(0,x)$ where $\mu$ is the M\"obius function of $P$ and $x$ ranges over the elements of $P$ of rank $k$.  The theory of subposets of the set partition lattice $\Pi_n$ consisting of partitions with restricted part sizes has received considerable attention in the literature, see for instance \cite{Calderbank,StanleyExp,Sylvester,Wachs}.  Our results give combinatorial interpretations of the Whitney numbers of the ranked poset $\Pi_n^{1,d}$ consisting of all partitions of $[n]$ that have each part size congruent to $1$ mod $d$.

\begin{thm} \label{thm:Whitney}
For all $n,d \geq 1$ and $k \geq 0$ we have

\[W_k(\Pi_n^{1,d}) = {n \brace n - k d}_{\{1, d+1, 2d+1, \ldots\}}\] and
\[w_k(\Pi_n^{1,d}) = {n \brace n - k d}^{-1}_{\{1, d+1, 2d+1, \ldots \}}\]

In particular, $w_k(\Pi_n^{1,d})$ is $(-1)^{k}$ times the number of forests in ${\cal H}(n,n-kd)$ in which each vertex has $0$ or $d+1$ children and in which left-most children are always leaves.
\end{thm}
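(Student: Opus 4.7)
My plan is to prove the two matrix identities of the theorem in turn, then invoke Corollary \ref{cor:special-cases} (Part 3) for the combinatorial description of $w_k$.

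First I verify that $\Pi_n^{1,d}$ is a ranked poset with rank function $r(\pi) = (n - |\pi|)/d$, where $|\pi|$ denotes the number of blocks of $\pi$. Since every block has size $\equiv 1 \pmod d$, we have $n \equiv |\pi| \pmod d$, so $r(\pi)$ is a non-negative integer, and $r(\hat{0}) = 0$ for the bottom element $\hat{0}$ (the singletons partition). Any cover relation $\pi \lessdot \tau$ must correspond to merging exactly $d+1$ blocks of $\pi$ into one block of $\tau$, since merging $1 + jd$ blocks with $j \geq 2$ always factors through an intermediate partition in $\Pi_n^{1,d}$. Each cover therefore raises the rank by $1$. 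The first identity $W_k(\Pi_n^{1,d}) = {n \brace n - kd}_R$, where $R := \{1, d+1, 2d+1, \ldots\}$, follows immediately from the definitions.

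For the second identity, I would establish the matrix identity
\[
{n \brace m}^{-1}_R = \sum_{\pi \in \Pi_n^R,\ |\pi| = m} \mu_{\Pi_n^R}(\hat{0}, \pi),
\]
valid for our specific $R$; setting $m = n - kd$ then gives $w_k(\Pi_n^{1,d}) = {n \brace n - kd}^{-1}_R$. Let $M$ denote the matrix given by the right-hand side. I would verify $S_R M = I$, where $(S_R)_{n,k} = {n \brace k}_R$: since ${n \brace k}_R$ counts $\pi \in \Pi_n^R$ with $|\pi| = k$, the $(n, m)$ entry of $S_R M$ is a double sum over pairs $(\pi, \rho)$ with $\pi \in \Pi_n^R$ and $\rho \in \Pi_{|\pi|}^R$, $|\rho| = m$, of $\mu_{\Pi_{|\pi|}^R}(\hat{0}, \rho)$. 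Such a pair identifies with a pair $\pi \leq \tau$ in $\Pi_n^R$ with $|\tau| = m$, where $\tau$ coarsens $\pi$ by reading $\rho$ as a partition of the blocks of $\pi$.

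The crux is the interval isomorphism $[\pi, \tau]_{\Pi_n^R} \cong [\hat{0}, \rho]_{\Pi_{|\pi|}^R}$, which gives $\mu_{\Pi_n^R}(\pi, \tau) = \mu_{\Pi_{|\pi|}^R}(\hat{0}, \rho)$. This is where the special structure of $R$ is essential: every element of $R$ is $\equiv 1 \pmod d$, so the sum of any $j$ elements of $R$ lies in $R$ if and only if $j \in R$. This ensures that an intermediate refinement of $\rho$ lies in $\Pi_{|\pi|}^R$ precisely when the corresponding intermediate coarsening of $\pi$ lies in $\Pi_n^R$, as the poset isomorphism requires. Substituting and invoking the Möbius identity $\sum_{\pi \leq \tau} \mu_{\Pi_n^R}(\pi, \tau) = [\tau = \hat{0}]$ yields
\[
(S_R M)_{n, m} = \sum_{\tau \in \Pi_n^R,\ |\tau| = m} [\tau = \hat{0}] = [n = m],
\]
so $M = S_R^{-1}$. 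The main technical obstacle is verifying this interval isomorphism, which hinges on the arithmetic closure property of $R$ highlighted above; for a general restriction set $R$ with $1 \in R$ the identification fails, so the argument is genuinely specific to $R = \{1, d+1, 2d+1, \ldots\}$. Finally, the concluding sentence follows from Corollary \ref{cor:special-cases} (Part 3) applied to ${n \brace n - kd}^{-1}_R$: since $(n - (n - kd))/d = k$, the corollary's sign becomes $(-1)^k$ and its forest description yields the stated combinatorial interpretation.
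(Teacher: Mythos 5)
Your proposal is correct and follows essentially the same route as the paper: both arguments establish that $\Pi_n^{1,d}$ is ranked with covers given by merging $d+1$ blocks, prove the upper-interval isomorphism $[\sigma,\infty)\cong\Pi_{|\sigma|}^{1,d}$ via the observation that block sizes of the induced partition are again $\equiv 1 \pmod d$, and then verify that the Whitney-number matrix is the one-sided (hence two-sided, by lower triangularity) inverse of $[{n\brace k}_{\{1,d+1,2d+1,\ldots\}}]$ using $\sum_{\sigma\in[\hat 0,\tau]}\zeta(\hat 0,\sigma)\mu(\sigma,\tau)={\bf 1}_{\{\tau=\hat 0\}}$. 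Your reformulation as a direct check of $S_R M = I$ is just a reindexing of the paper's computation of $S(n,\ell)$, and the concluding combinatorial statement is obtained in both cases from Theorem \ref{thm:single-forest-stretched} (via Corollary \ref{cor:special-cases}, Part 3).
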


Our paper is organized as follows.  We provide definitions related to our combinatorial interpretations in Section \ref{sec:notation} and then
state our main results in Section \ref{sec:results}.  In Section \ref{sec:preliminaries}, we state some preliminary lemmas.  We give proofs of our main results in Section \ref{sec:proofs}. In Section \ref{sec:conclusion} we note some connections to known number sequences and indicate some directions for future research.

\section{Notation} \label{sec:notation}

As is evident from Corollary \ref{cor:special-cases} and Theorem \ref{thm:Whitney}, trees and forests figure heavily in our results. Our trees will all be {\em rooted}, i.e.\! they will come with a distinguished root vertex. Our forests will also all be {\em rooted}, i.e.\! they will consist of unordered collections of rooted trees. Let $F$ be a rooted forest and let $v$ and $w$ be vertices of $F$.  If $v$ lies on the path from $w$ to a root, then $v$ is an {\em ancestor} of $w$ and $w$ is a {\em descendant} of $v$.  If, in addition, $v$ and $w$ are neighbors, we say $v$ is the {\em parent} of $w$ and $w$ is a {\em child} of $v$.  We say $v$ and $w$ are {\em siblings} if they have the same parent.  The {\em degree} or {\em down-degree} of $v$, denoted $d_F(v)$, is the number of children of $v$ in $F$.  We say $v$ is a {\em leaf} of $F$ if $d_F(v) = 0$.  Note that by our definition, isolated roots are also leaves.

Our forests will either have ordered children or unordered children.
A forest with {\em unordered children} is just a graph made up of rooted trees with no ordering on sets of siblings. A forest has {\em ordered children} if the set of children of each non-leaf vertex $v$ is given a specific linear order from {\em left-most} to {\em right-most}.  Although a rooted tree with ordered children is usually called a plane tree we avoid this terminology as we do not consider plane forests, i.e. linearly ordered collections of plane trees. The components of our forests will always be unordered.

If $T$ is a tree, a {\em leaf-labeling} of $T$ is an injective map $\ell$ from the leaves of $T$ to $\mathbb{N}$.  A leaf-labeling of a tree with $n$ leaves is {\em proper} if it has range $[n]$. We will work with two extensions of a leaf-labeling to non-leaf vertices.
\begin{defn} \label{def:leaf-labels}
Given a leaf-labeling $\ell$ of the leaves of a tree $T$, the labeling $\ell_{\max}$ on the vertices of $T$ is defined by setting $\ell_{\max}(v)$ to be the maximum of the labels of the leaves descended from $v$. The labeling $\ell_{\min}$ is defined by setting $\ell_{\min}(v)$ to be the minimum of the labels of the leaves descended from $v$.  
\end{defn}
Note that any two children of a vertex have distinct labels with respect to the $\ell_{\max}$ (or $\ell_{\min}$) labeling.

A {\em phylogenetic tree (forest)} is a rooted tree (forest) with unordered children such that no vertex has degree $1$, together with a proper leaf-labeling. For $1 \leq k \leq n$, we define ${\cal T}(n)$ to be the family of phylogenetic trees on $n$ leaves and ${\cal F}(n,k)$ to be the family of phylogenetic forests with $n$ leaves and $k$ unordered components. Also, let ${\cal T}^{\rm even}(n)$ denote the subset of trees in ${\cal T}(n)$ that have an even number of edges, and let ${\cal T}^{\rm odd}(n)$ be the complementary set of trees with an odd number of edges.

\begin{defn} \label{def:special-trees}
Let $G$ be a phylogenetic tree or forest. If each complete set of siblings (full set of children of a non-leaf vertex of $G$) is assigned a linear ordering, we say that $G$ is a {\em linearly ordered phylogenetic tree (forest)}. We say $G$ is {\em increasingly ordered} if $G$ is linearly ordered and if additionally for each complete set of siblings, the $\ell_{\max}$ label of the siblings increases from left to right.  We say $G$ is {\em min-first ordered} if $G$ is linearly ordered and if additionally for each complete set of siblings, the left-most sibling has the smallest $\ell_{\min}$ label amongst all the siblings. 
\end{defn}
Let ${\cal T}^{\rm i.o.}(n)$, ${\cal T}^{\rm m.o.}(n)$, and ${\cal T}^{\rm l.o.}(n)$ be the families of increasingly ordered, min-first ordered, and linearly ordered phylogenetic trees on $n$ leaves, respectively.  For all $1 \leq k \leq n$ we define ${\cal F}^{\rm i.o.}(n,k)$ (${\cal F}^{\rm m.o.}(n,k)$, ${\cal F}^{\rm l.o.}(n,k)$) to be the family of increasingly (min-first, linearly) ordered phylogenetic forests on $n$ leaves with $k$ unordered components.

If $R \subseteq \mathbb{N}$ and ${\cal C}$ is any class of trees or forests, we write ${\cal C}_R$ for the subclass of objects in ${\cal C}$ which have all non-zero down-degrees lying in $R$.  For example, ${\cal T}^{\rm i.o.}_R(n)$ is the set of all increasingly ordered phylogenetic trees with $n$ leaves and all non-zero down-degrees lying in $R$.

For $d \geq 1$ let $s_d:\mathbb{N} \to \mathbb{N}$ be defined by $s_d(n):=d(n-1)+1$.  As we defined in the introduction, let $R(d) = s_d(R) = \{ d(n-1)+1: n \in R\}$. Note that $s_1$ is the identity and $R(1) = R$.  

\begin{defn} \label{def:internal-sequence}  Let $R \subseteq \mathbb{N}$ and let $d \geq 1$.  If $G$ is a phylogenetic forest with all down-degrees in $R(d)$ let $(v_i)_{i=1}^m$ be some arbitrary but fixed ordered list of the non-leaf vertices of $G$. For each $i$ let $n_i$ be the unique integer such that $d(v_i)=s_d(n_i)$. We refer to $(n_i)_{i=1}^m$ as the \emph{internal sequence} of $G$.  We say that $G$ is {\em even} if $\sum_{i=1}^m n_i$ is even and \emph{odd} otherwise. 
\end{defn}

Note that if $d=1$ then $n_i = d(v_i)$ and $\sum_{i=1}^m n_i$ is just the number of edges of $G$. We define ${\cal T}_{R(d)}^{\rm i.o., even}(n)$ (${\cal T}_{R(d)}^{\rm i.o., odd}(n)$) to be the sets of even (odd) increasingly ordered trees on $n$ leaves with down-degrees in $R(d)$ and define the analogous notations for the other possible subclasses of even and odd ordered trees and forests.  For example ${\cal F}_{R(d)}^{\rm m.o., odd}(n,k)$ is the set of odd min-first ordered phylogenetic forests with down-degrees in $R(d)$ and with $n$ leaves and $k$ components.  If $d=1$ then, since $R(1) = R$, we will write this as ${\cal F}_{R}^{\rm m.o., odd}(n,k)$.

\section{Results} \label{sec:results}

In this section we state our main results. Using a formula for combinatorial Lagrange inversion we obtain the following combinatorial interpretation for each inverse $R$-restricted number (with $1 \in R$) as the difference in cardinality between two sets of forests.
\begin{thm} \label{thm:forest-difference}

Let $R \subseteq {\mathbb N}$.  Then ${n \brace k}^{-1}_{R}$, ${n \brack k}^{-1}_{R}$, and $L(n,k)^{-1}_{R}$ exist if and only if $1 \in R$. For all $R$ with $1 \in R$ and all $n,k \geq 1$ we have
$$
{n \brace k}^{-1}_{R} = (-1)^{n-k} \left(\left|{\mathcal F}^{\rm i.o., even}_R(n,k)\right|-\left|{\mathcal F}^{\rm i.o., odd}_{R}(n,k)\right|\right),
$$
$$
{n \brack k}^{-1}_{R} = (-1)^{n-k} \left( \left|{\mathcal F}^{\rm m.o., even}_R(n,k)\right|-\left|{\mathcal F}^{\rm m.o., odd}_{R}(n,k)\right|\right),
$$
$$
L(n,k)^{-1}_{R} = (-1)^{n-k} \left(\left|{\mathcal F}^{\rm l.o., even}_R(n,k)\right|-\left|{\mathcal F}^{\rm l.o., odd}_{R}(n,k)\right|\right).
$$
\end{thm}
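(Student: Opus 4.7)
The plan is to translate the matrix inversion into a compositional inversion of exponential generating functions, and then extract the combinatorial content via a species-theoretic version of Lagrange inversion. For existence, each of $[{n \brace k}_R]$, $[{n \brack k}_R]$, $[L(n,k)_R]$ is lower triangular (any partition of $[n]$ has at most $n$ parts), and the diagonal entry at $(n,n)$ counts partitions of $[n]$ into $n$ singletons, which is $1$ if $1 \in R$ and $0$ otherwise. Hence the matrices are invertible over $\mathbb{Z}$ iff $1 \in R$; when they are, the inverse is lower triangular with unit diagonal and integer entries.

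To obtain the formula, set $f_R(x) = \sum_{m \in R} x^m/m!$, $c_R(x) = \sum_{m \in R} x^m/m$, and $\ell_R(x) = \sum_{m \in R} x^m$. The exponential formula gives
$$\frac{f_R(x)^k}{k!} = \sum_{n \geq k} {n \brace k}_R \frac{x^n}{n!}, \qquad \frac{c_R(x)^k}{k!} = \sum_{n \geq k} {n \brack k}_R \frac{x^n}{n!}, \qquad \frac{\ell_R(x)^k}{k!} = \sum_{n \geq k} L(n,k)_R \frac{x^n}{n!}.$$
By the standard EGF/matrix dictionary (the inverse of the exponential Riordan array attached to a delta series $f$ is the one attached to $f^{-1}$), this forces ${n \brace k}^{-1}_R = n![x^n]\,g_R(x)^k/k!$, where $g_R$ is the compositional inverse of $f_R$, and analogous formulas for ${n \brack k}^{-1}_R$ and $L(n,k)^{-1}_R$ in terms of the compositional inverses $g_R^{\mathrm{cyc}}$ of $c_R$ and $g_R^{\mathrm{Lah}}$ of $\ell_R$.

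Iterating the defining equation $g_R(x) = x - \sum_{m \in R,\,m \geq 2} g_R(x)^m/m!$ identifies $g_R$ combinatorially as the signed EGF of a species of rooted phylogenetic trees whose internal vertices have down-degree in $R \setminus \{1\}$, with each internal vertex contributing a weight of $-1$. Passing to $g_R^k/k!$ encodes unordered $k$-component phylogenetic forests; since the leaves of each component are distinctly labeled, no two siblings share an $\ell_{\max}$ value, and sorting each sibling set left-to-right by $\ell_{\max}$ yields a canonical bijection $\mathcal{F}_R(n,k) \to \mathcal{F}^{\rm i.o.}_R(n,k)$. Therefore
$$n![x^n]\,\frac{g_R(x)^k}{k!} = \sum_{F \in \mathcal{F}^{\rm i.o.}_R(n,k)} (-1)^{m(F)},$$
where $m(F)$ denotes the number of internal vertices of $F$. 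Any such $F$ has $n + m(F)$ vertices and $n + m(F) - k$ edges, and for $d = 1$ the internal sequence $(n_i)$ is simply the sequence of down-degrees, so $\sum_i n_i = n + m(F) - k$; hence $(-1)^{m(F)} = (-1)^{n-k}(-1)^{\sum_i n_i}$, and summing over $F$ gives the claimed identity for ${n \brace k}^{-1}_R$. The arguments for ${n \brack k}^{-1}_R$ and $L(n,k)^{-1}_R$ are parallel: iterating $g_R^{\mathrm{cyc}} = x - \sum_{m \in R,\,m \geq 2}(g_R^{\mathrm{cyc}})^m/m$ produces a factor $(m-1)!$ at each internal vertex of degree $m$, absorbed by a min-first linear ordering of its children (of which there are exactly $(m-1)!$ once the $\ell_{\min}$-smallest is required to be leftmost); for Lah numbers the analogous iteration produces a factor $m!$, absorbed by a free linear ordering of children.

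The main obstacle is making the Lagrange-inversion step rigorous, i.e.\ verifying that iteration of the functional equation genuinely produces the claimed signed sum over phylogenetic forests. This can be carried out either abstractly within the Joyal--Labelle framework of species, or by a direct induction on $n$ checking that the signed forest-sum $\widetilde{g}_R(x) := \sum_{T} (-1)^{m(T)} x^{|\mathrm{leaves}(T)|}/|\mathrm{leaves}(T)|!$ satisfies $\widetilde{g}_R = x - \sum_{m \in R,\,m \geq 2} \widetilde{g}_R^m/m!$ and then invoking uniqueness of the compositional inverse.
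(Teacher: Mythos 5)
Your proposal is correct and follows essentially the same route as the paper: lower triangularity with unit diagonal for existence, the exponential Riordan/EGF dictionary to reduce the inverse matrix to the compositional inverse of the column-one generating function (the paper's Lemma \ref{lem:Riordan}), combinatorial Lagrange inversion via the functional equation to expand that inverse as a signed sum over phylogenetic trees (the paper's Lemma \ref{lem:Lagrange-inversion}, proved there by exactly the induction-on-the-recurrence argument you sketch as the rigorization), absorption of the weights $1$, $(m-1)!$, $m!$ into increasingly, min-first, and linearly ordered structures, and the edge-count sign bookkeeping $(-1)^{m(F)}=(-1)^{n-k}(-1)^{\sum_i n_i}$. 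The only cosmetic difference is that you read off the forest-level formula directly from $g_R(x)^k/k!$, whereas the paper passes through the componentwise identity $b_{n,k}=\sum b_{|P_1|}\cdots b_{|P_k|}$ and a parity argument on the components; these are equivalent.
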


Recall (Definition \ref{def:no-exposed-odds}) that $R \subseteq \mathbb{N}$ has no exposed odds if (i) $2 \in R$ whenever $1 \in R$, and (ii) $n-1, n+1 \in R$ whenever $n \in R$, $n \geq 3$, and $n$ is odd. Our main result is that for $R$ containing $1$ and with no exposed odds, we can express each inverse entry, up to sign, as the cardinality of a \emph{single set} of forests. We next define the terms needed to describe these sets.

We write $R$ as a disjoint union of its maximal intervals.  Thus if $R$ has no exposed odds it is a union of intervals of the form $[1, \infty)$, $[1,b]$ with $b$ even, $[a,\infty)$ with $a$ even, or $[a,b]$ with $a \leq b$ and $a$ and $b$ even.  Let $a(R)$ be the set of all left endpoints of the intervals in this decomposition of $R$, except $1$, and let $b(R)$ be the set of all right endpoints. Note that if $R = \mathbb{N} = [1, \infty)$ then $a(R)$ and $b(R)$ are empty. Note also that if $[x,x] = \{x\}$ is one of the maximal intervals of $R$, then $x \in a(R)$ and $x \in b(R)$.

\begin{defn}\label{def:rightpath}
Let $v$ be a vertex in a linearly ordered tree or forest $G$.  Then $v$ has \emph{$2$-left-odd ancestry} if $v$ has some ancestor $v_1$ with the following properties:
\begin{itemize}
\item along the path $v_1, \ldots, v_k=v$ from $v_1$ to $v$, for each $1 \leq i < k$ it holds that $d(v_i)=2$, $v_{i+1}$ is a left-most child of $v_i$, and $k$ is even, and
\item $v_1$ is not a left-most child of a vertex $w$ with $d(w)=2$.
\end{itemize}
For $d \geq 1$, we say $v$ has \emph{$s_d(2)$-left-odd ancestry} if $v$ has some ancestor $v_1$ such that
\begin{itemize}
\item along the path $v_1, \ldots, v_k=v$ from $v_1$ to $v$, for each $1 \leq i < k$ it holds that $d(v_i)=s_d(2)$, $v_{i+1}$ is a left-most child of $v_i$, and $k$ is even, and
\item $v_1$ is not a left-most child of a vertex $w$ with $d(w)=s_d(2)$.
\end{itemize}
\end{defn}

In Figure \ref{fig-Rgood}(a), only vertex $w_2$ has $2$-left-odd ancestry. In Figures \ref{fig-Rgood}(b) and \ref{fig-Rgood}(c), only vertices $w_2$ and $w_4$ have $2$-left-odd ancestry. 

\begin{figure}[ht!]
\begin{center}
\begin{tikzpicture}[scale=.9,every node/.style={scale=0.95}]

\node at (-0.5,5) {(a)};

\node (a1) at (1,5) [circle,draw,label=180:$w_1$] {};
\node (a2) at (1.5,4) [circle,draw,label=270:$1$] {};
\node (a3) at (0.5,4) [circle,draw,label=180:$w_2$] {};
\node (a4) at (1,3) [circle,draw,label=270:$2$] {};
\node (a5) at (0,3) [circle,draw,label=180:$w_3$] {};
\node (a6) at (0.75,2) [circle,draw,label=270:$3$] {};
\node (a7) at (0.25,2) [circle,draw,label=270:$4$] {};
\node (a8) at (-0.25,2) [circle,draw,label=270:$5$] {};
\node (a9) at (-0.75,2) [circle,draw,label=270:$6$] {};

\foreach \from/\to in {a1/a2,a1/a3,a3/a4,a3/a5,a5/a6,a5/a7,a5/a8,a5/a9} \draw (\from) -- (\to);

\node at (4.5,5) {(b)};
\node (b1) at (6,5) [circle,draw,label=180:$w_1$] {};
\node (b2) at (6.5,4) [circle,draw,label=270:$1$] {};
\node (b3) at (5.5,4) [circle,draw,label=180:$w_2$] {};
\node (b4) at (6,3) [circle,draw,label=270:$2$] {};
\node (b5) at (5,3) [circle,draw,label=180:$w_3$] {};
\node (b6) at (5.5,2) [circle,draw,label=270:$3$] {};
\node (b7) at (4.5,2) [circle,draw,label=180:$w_4$] {};
\node (b8) at (5.25,1) [circle,draw,label=270:$4$] {};
\node (b9) at (4.75,1) [circle,draw,label=270:$5$] {};
\node (b10) at (4.25,1) [circle,draw,label=270:$6$] {};
\node (b11) at (3.75,1) [circle,draw,label=270:$7$] {};

\foreach \from/\to in {b1/b2,b1/b3,b3/b4,b3/b5,b5/b6,b5/b7,b7/b8,b7/b9,b7/b10,b7/b11} \draw (\from) -- (\to);

\node at (9.5,5) {(c)};
\node (c1) at (11,5) [circle,draw,label=180:$w_1$] {};
\node (c2) at (11.5,4) [circle,draw,label=270:$1$] {};
\node (c3) at (10.5,4) [circle,draw,label=180:$w_2$] {};
\node (c4) at (11,3) [circle,draw,label=270:$2$] {};
\node (c5) at (10,3) [circle,draw,label=180:$w_3$] {};
\node (c6) at (10.5,2) [circle,draw,label=270:$3$] {};
\node (c7) at (9.5,2) [circle,draw,label=180:$w_4$] {};
\node (c8) at (10,1) [circle,draw,label=270:$4$] {};
\node (c9) at (9,1) [circle,draw,label=180:$w_5$] {};
\node (c10) at (9.75,0) [circle,draw,label=270:$5$] {};
\node (c11) at (9.25,0) [circle,draw,label=270:$6$] {};
\node (c12) at (8.75,0) [circle,draw,label=270:$7$] {};
\node (c13) at (8.25,0) [circle,draw,label=270:$8$] {};

\foreach \from/\to in {c1/c2,c1/c3,c3/c4,c3/c5,c5/c6,c5/c7,c7/c8,c7/c9,c9/c10,c9/c11,c9/c12,c9/c13} \draw (\from) -- (\to);
\end{tikzpicture}
\end{center}
\caption{Three examples of linearly ordered phylogenetic trees.}\label{fig-Rgood}
\end{figure}
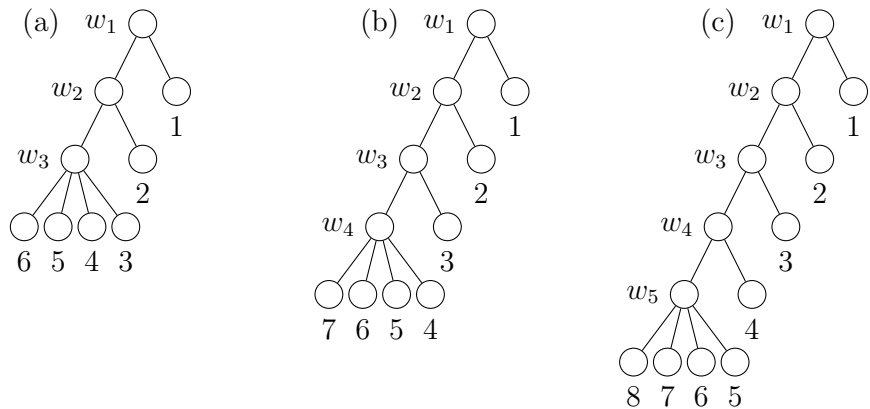

\begin{defn}\label{def:R-good}
Let $G$ be a linearly ordered tree or forest and let $R$ have no exposed odds.  Say $G$ is \emph{$R$-good} if and only if for all vertices $v$, either $v$ is a leaf or $d(v)=2$ or $d(v) \in a(R)$, unless $v$ has $2$-left-odd ancestry, in which case either $v$ is a leaf or $d(v) \in b(R)$. 

For $d \geq 1$ say that $G$ is \emph{$R(d)$-good} if and only if for all vertices $v$, either $v$ is a leaf or $d(v)=s_d(2)~(=d+1)$ or $d(v)=s_d(a)$ for some $a \in a(R)$, unless $v$ has $s_d(2)$-left-odd ancestry, in which case either $v$ is a leaf or $d(v)=s_d(b)$ for some $b \in b(R)$.
\end{defn}

Note that $R$-goodness and $R(1)$-goodness coincide.  When $3 \in R$, the next claim shows that ``has $s_d(2)$-left-odd ancestry'' in Definition \ref{def:R-good} can be replaced by the simpler ``is the left-most child of a vertex $w$ with $d(w)=s_d(2)$.'' So when $3 \in R$, all non-leaf left-children of degree $s_d(2)$ vertices in an $R(d)$-good tree have degree $s_d(n)$ for $n >2$.

\begin{claim} \label{claim:3inR}
If $3 \in R$, then $G$ is $R(d)$-good if and only if for all vertices $v$, either $v$ is a leaf or  $d(v) =s_d(2)$ or $d(v) = s_d(a)$ for some $a \in a(R)$, unless $v$ is the left-most child of a vertex $w$ with $d(w)= s_d(2)$, in which case either $v$ is a leaf or $d(v) = s_d(b)$ for some $b \in b(R)$.
\end{claim}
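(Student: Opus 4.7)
The plan is to reduce the claim to a single equivalence: in any linearly ordered forest $G$ that is $R(d)$-good in either of the two proposed senses, a vertex $v$ has $s_d(2)$-left-odd ancestry if and only if $v$ is the left-most child of some vertex $w$ with $d(w) = s_d(2)$. Once this is established, Definition~\ref{def:R-good} and the Claim's characterization become verbatim the same statement. The crucial numerical input is that $3 \in R$, together with the no-exposed-odds hypothesis, forces $\{2,3,4\} \subseteq R$, so that $2$ sits strictly inside a maximal interval of $R$; in particular $2 \notin b(R)$, and therefore $s_d(2) \neq s_d(b)$ for any $b \in b(R)$.

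One direction of the equivalence is immediate from the definition: if $v$ has $s_d(2)$-left-odd ancestry via a chain $v_1, \ldots, v_k = v$, then the parent $v_{k-1}$ of $v$ already satisfies $d(v_{k-1}) = s_d(2)$ and has $v$ as its left-most child. For the reverse direction I would establish the following lemma: in either kind of good forest, no vertex $w$ with $d(w) = s_d(2)$ is itself a left-most child of another degree-$s_d(2)$ vertex. Granting this lemma, whenever $v$ is the left-most child of a degree-$s_d(2)$ vertex $w$, the length-two chain $v_1 = w$, $v_2 = v$ is a valid witness to $s_d(2)$-left-odd ancestry for $v$, because $w$ cannot be a left-most child of any degree-$s_d(2)$ vertex.

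Under the Claim's characterization the lemma is direct: if such a $w$ existed, applying the characterization to $w$ would give $d(w) = s_d(b)$ for some $b \in b(R)$, contradicting $2 \notin b(R)$. Under Definition~\ref{def:R-good} I would argue by contradiction: suppose $w$ has $d(w) = s_d(2)$ and is a left-most child of some $w'$ with $d(w') = s_d(2)$, and walk upward from $w$ through left-most descents of degree-$s_d(2)$ vertices until reaching a maximal chain $v_1, v_2, \ldots, v_k = w$ with $k \geq 2$, $v_{k-1} = w'$, and $v_1$ not itself a left-most child of any degree-$s_d(2)$ vertex. A parity dichotomy then closes the argument: if $k$ is even, $w$ has $s_d(2)$-left-odd ancestry; if $k$ is odd, $w' = v_{k-1}$ has $s_d(2)$-left-odd ancestry via the chain $v_1, \ldots, v_{k-1}$ of length $k-1$. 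Either way, a vertex of degree $s_d(2)$ acquires $s_d(2)$-left-odd ancestry, forcing $2 \in b(R)$ by Definition~\ref{def:R-good} and giving the desired contradiction. I expect this parity bookkeeping to be the only mildly delicate step; everything else is a direct unpacking of the definitions.
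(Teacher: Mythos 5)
Your proposal is correct and takes essentially the same approach as the paper: the paper's entire proof is your key lemma --- that when $3 \in R$ an $R(d)$-good forest cannot have a degree-$s_d(2)$ vertex as the left-most child of another degree-$s_d(2)$ vertex, since otherwise one of the two would have $s_d(2)$-left-odd ancestry while $2 \notin b(R)$. Your write-up merely makes explicit the parity dichotomy and the two directions of the resulting equivalence, which the paper leaves implicit.
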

  
\begin{proof} 
If $3 \in R$, then an $R(d)$-good tree or forest cannot have a vertex $w_2$ as a left-most child of a vertex $w_1$ where $d(w_2)=d(w_1)=s_d(2)$.  Indeed, one of $w_1$ or $w_2$ would have $s_d(2)$-left-odd ancestry, and $2 \notin b(R)$.  
\end{proof}

We provide a few examples to illustrate these definitions.

\begin{ex}
Suppose that $R=\{1,2\} \cup\{4,5,6\}$, so $a(R) = \{4\}$ and $b(R) = \{2,6\}$. Consider the three phylogenetic trees in Figure \ref{fig-Rgood}.  Trees (a) and (c) are $R$-good while tree (b) is not, since vertex $w_4$ has $2$-left-odd ancestry, but $w_4$ is not a leaf and $d(w_4)=4 \notin b(R)$.
\end{ex}

\begin{ex}
If $R=\{1,2\}$, then an $R$-good tree is precisely a binary tree with ordered children and a proper leaf labeling, and an $R(d)$-good tree is precisely a tree with ordered children and all degrees $0$ or $d+1$, together with a proper leaf labeling.
\end{ex}

\begin{ex}
If $R = [r]$ for even $r \geq 4$, then an $R$-good tree is precisely a leaf-labeled tree with ordered children and all degrees $0$, $2$, or $r$ and where the left children of vertices of degree $2$ have degree $0$ or $r$. Note that for $R=[4]$, none of the trees in Figure \ref{fig-Rgood} are $R$-good.
\end{ex}

We define ${\cal T}^{\rm i.o., good}_R(n)$  (${\cal T}^{\rm i.o., good}_{R(d)}(n)$) to be the class of increasingly ordered $R$-good ($R(d)$-good) phylogenetic trees on $n$ leaves and define the analogous notations for other classes of ordered $R$- and $R(d)$-good trees and forests. For example, ${\cal F}_{R(d)}^{\rm m.o., good}(n,k)$ is the set of $R(d)$-good min-first ordered phylogenetic forests with $n$ leaves and $k$ components.  If $d=1$, we write this as just ${\cal F}_{R}^{\rm m.o., good}(n,k)$.  It is straightforward to check that good trees and forests are even. Indeed, since $R$ has no exposed odds, the sets $a(R)$ and $b(R)$ are comprised of even numbers.  By the definition of $R(d)$-goodness, this means the internal sequence (see Definition \ref{def:internal-sequence}) of $G$ is comprised of even numbers and hence has even sum.

Our main results are the following theorems. 
\begin{thm} \label{thm:single-forest}
For all $R \subseteq {\mathbb N}$ with $1 \in R$ and with no exposed odds, and for all $n,k \geq 1$, we have
$$
{n \brace k}^{-1}_{R} = (-1)^{n-k} \left|{\mathcal F}^{\rm i.o., good}_R(n,k)\right|,
$$
$$
{n \brack k}^{-1}_{R} = (-1)^{n-k} \left|{\mathcal F}^{\rm m.o., good}_R(n,k)\right|,
$$
$$
L(n,k)^{-1}_{R} = (-1)^{n-k} \left|{\mathcal F}^{\rm l.o., good}_R(n,k)\right|.
$$
\end{thm}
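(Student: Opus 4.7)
My plan is to derive the theorem from Theorem~\ref{thm:forest-difference} by constructing, for each of the three ordering types (i.o., m.o., l.o.), a sign-reversing involution $\phi$ on the set of non-good forests in $\mathcal{F}_R(n,k)$ that interchanges even and odd forests. Since the excerpt already notes that $a(R)$ and $b(R)$ consist of even numbers under the no-exposed-odds hypothesis, every $R$-good forest is automatically even; combined with Theorem~\ref{thm:forest-difference}, the existence of $\phi$ collapses the signed difference of even and odd forests to a count of good forests, which is exactly the claim.

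I would build $\phi$ as a local split/merge surgery performed at a canonical ``first bad'' vertex. Traverse the non-leaf vertices of a forest $F$ in a canonical order---by increasing $\ell_{\max}$ for the i.o.\ and l.o.\ cases, by increasing $\ell_{\min}$ for the m.o.\ case---and let $v^*$ be the first one violating $R$-goodness. The \emph{split} at $v^*$ takes a vertex with ordered children $c_1, c_2, \ldots, c_d$ (with $d \geq 3$) and replaces it by a vertex of degree $d-1$ whose new leftmost child $v'$ is a degree-$2$ vertex with children $c_1, c_2$; the \emph{merge} is its formal inverse, collapsing an eligible degree-$2$ leftmost child back into its parent. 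A split changes the internal-sequence contribution at $v^*$ from $(d)$ to $(d-1, 2)$, so the internal sum shifts by $1$ modulo $2$, flipping parity; the merge reverses this. The no-exposed-odds hypothesis is essential here: whenever $d$ is an odd element of $R$ with $d \geq 3$, both $d-1$ and $d+1$ belong to $R$, so the surgery never creates a degree outside $R$.

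The direction of the surgery at $v^*$ (split versus merge) is dictated by which of the two goodness-violating categories from Definition~\ref{def:R-good} $v^*$ occupies, together with the parity of the left-chain of degree-$2$ vertices sitting immediately above $v^*$. This is precisely where Definition~\ref{def:rightpath}'s parity-$k$ condition, and its anchoring requirement that $v_1$ not be a leftmost child of a degree-$2$ vertex, do their work: they encode exactly when an extra degree-$2$ left-neighbour should be absorbed into, or emitted from, the chain while keeping the resulting forest non-good and its first bad vertex at the same site. The compatibility with the three ordering constraints is uniform: the split preserves i.o.\ because $\ell_{\max}(v') = \ell_{\max}(c_2) < \ell_{\max}(c_3)$; it preserves m.o.\ because $\ell_{\min}(v') = \ell_{\min}(c_1)$ is the minimum among $v^*$'s new children; and it trivially preserves l.o.

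The hardest step, and therefore the main obstacle, will be verifying that all vertices preceding $v^*$ in the canonical order remain good after the surgery. The surgery alters the left-chain above $v^*$ by adding or removing a degree-$2$ link, which can in principle shift the $2$-left-odd ancestry status of nearby vertices; one must argue that any such shift is confined to $v^*$ and its descendants, so that all earlier vertices retain their goodness status. Once this bookkeeping is in place, $\phi \circ \phi = \mathrm{id}$ follows by inspection, the parity-flip is immediate, and the three identities of the theorem follow simultaneously.
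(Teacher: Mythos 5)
Your overall strategy --- a parity-reversing involution on non-good objects whose fixed points are exactly the good ones, combined with Theorem \ref{thm:forest-difference} and the observation that good forests are even --- is exactly the paper's strategy. But your local surgery is the wrong one, and it genuinely fails. You propose the split $(d) \mapsto (d-1,2)$, creating a new degree-$2$ vertex as the \emph{left-most child} of a now degree-$(d-1)$ vertex, with the merge as its inverse. Two problems. First, this surgery can leave the class ${\mathcal T}_R$: the no-exposed-odds hypothesis guarantees $d\pm 1\in R$ only for \emph{odd} $d\ge 3$, and the degrees at which goodness actually fails are the even ones in $a(R)\cup b(R)$ (these are endpoints of maximal intervals of $R$, so typically $d-1\notin R$). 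Second, and decisively, your involution is simply undefined on some non-good trees. Take $R=\{1,2\}\cup\{4,5,6\}$ and the tree of Figure \ref{fig-Rgood}(b): the unique bad vertex is $w_4$, which has degree $4$ and $2$-left-odd ancestry but $4\notin b(R)$. All four of its children are leaves, so there is no degree-$2$ left-most child to merge; and splitting would create a degree-$3$ vertex with $3\notin R$. Neither of your moves applies, so this tree cannot be paired with anything, and the signed sum does not collapse.

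The paper's surgery is oriented the opposite way: contraction absorbs a degree-$n$ left-most child into its degree-$2$ \emph{parent} (giving degree $n+1$, legal exactly when $n\notin b(R)$), and uncontraction inserts a degree-$2$ vertex \emph{above} the left-most $n-1$ children of a degree-$n$ vertex (legal exactly when $n\notin a(R)$). On the tree above it contracts $w_4$ into $w_3$, producing a degree-$5$ vertex --- a move your toolkit lacks. This orientation is what the definition of $R$-goodness is reverse-engineered from: the vertices permitted to carry degrees in $b(R)$ are those sitting \emph{below} chains of degree-$2$ left-most-child edges (the ``$2$-left-odd ancestry'' condition), not above them. With your orientation, even where both moves are legal, the fixed-point set would not be the paper's ${\mathcal F}^{\rm good}_R(n,k)$, so the identity as stated would not follow. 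The other ingredients you describe (canonical first-bad-vertex selection, parity flip of the internal sequence, preservation of the three ordering types, good $\Rightarrow$ even) are all sound and mirror the paper, but the core combinatorial move must be replaced by the contraction/uncontraction of Algorithm \ref{alg:algorithm}, and the bookkeeping you correctly flag as the hard step (that the surgery does not disturb the goodness status of other vertices) is handled in the paper via the right-most-path recursion and Lemma \ref{lemma:characterization}.
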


\begin{thm} \label{thm:single-forest-stretched}
For all $R \subseteq {\mathbb N}$ with $1 \in R$ and with no exposed odds, all $d \geq 1$, and all $n,k \geq 1$, we have
$$
{n \brace k}^{-1}_{R(d)} = (-1)^{(n-k)/d} \left|{\mathcal F}^{\rm i.o., good}_{R(d)}(n,k)\right|,
$$
$$
{n \brack k}^{-1}_{R(d)} = (-1)^{(n-k)/d} \left|{\mathcal F}^{\rm m.o., good}_{R(d)}(n,k)\right|,
$$
$$
L(n,k)^{-1}_{R(d)} = (-1)^{(n-k)/d} \left|{\mathcal F}^{\rm l.o., good}_{R(d)}(n,k)\right|.
$$
\end{thm}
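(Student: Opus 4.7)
My strategy is to parallel the proof of Theorem \ref{thm:single-forest}, tracking the internal-sequence parity (Definition \ref{def:internal-sequence}) rather than the edge-count parity. Step one is to derive the $d$-stretched analogue of Theorem \ref{thm:forest-difference}:
\[
{n \brace k}^{-1}_{R(d)} = (-1)^{(n-k)/d}\bigl(|{\mathcal F}^{\rm i.o., even}_{R(d)}(n,k)|-|{\mathcal F}^{\rm i.o., odd}_{R(d)}(n,k)|\bigr),
\]
and analogously for the first-kind and Lah cases, where ``even'' is taken in the stretched internal-sequence sense. For an $R(d)$-restricted phylogenetic forest with $m$ non-leaf vertices, the identities $\sum \delta_i = d\sum(n_i-1) + m$ and $n-k = d\sum(n_i-1)$ give
\[
(n-k)+\sum \delta_i \equiv m \equiv (n-k)/d+\sum n_i \pmod 2,
\]
so $(-1)^{n-k}(-1)^{\sum \delta_i}=(-1)^{(n-k)/d}(-1)^{\sum n_i}$ pointwise, and the stretched formula is an immediate consequence of Theorem \ref{thm:forest-difference} applied to $R' = R(d)$.

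Step two is to build a sign-reversing involution $\phi$ on the set of non-$R(d)$-good ordered phylogenetic forests, pairing stretched-even with stretched-odd. A non-$R(d)$-good forest contains at least one \emph{violator}: either (i) a non-leaf vertex $v$ lacking $s_d(2)$-left-odd ancestry with $n_v \notin \{2\} \cup a(R)$, or (ii) a non-leaf $v$ with such ancestry and $n_v \notin b(R)$. Following the standard Lagrange-inversion cancellation framework, I would locate the first violator $v$ in a canonical traversal (say, the depth-first pre-order induced by the sibling order and the $\ell_{\max}$ order on the $k$ roots) and restructure the subtree rooted at $v$ so as to toggle $n_v$ across an even--odd boundary in $R$, thereby changing $\sum n_i$ by $\pm 1$. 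The no-exposed-odds hypothesis guarantees $n_v \pm 1 \in R$ whenever $n_v \geq 3$ is odd, and at the boundary values $n_v \in a(R) \cup b(R)$, where only one of $n_v \pm 1$ is odd and in $R$, the $s_d(2)$-left-odd ancestry status of $v$ selects the unique direction of the toggle. The same recipe applies to the min-first and linearly ordered variants because $\phi$ acts only in a bounded neighbourhood of $v$.

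Step three observes that every $R(d)$-good forest has each $n_v \in \{2\} \cup a(R) \cup b(R)$, which the no-exposed-odds hypothesis ensures are all even, so every $R(d)$-good forest is stretched-even. Combined with the cancellation from $\phi$ on the non-good forests, this yields
\[
|{\mathcal F}^{\rm i.o., even}_{R(d)}(n,k)|-|{\mathcal F}^{\rm i.o., odd}_{R(d)}(n,k)|=|{\mathcal F}^{\rm i.o., good}_{R(d)}(n,k)|,
\]
and substitution into the stretched forest-difference formula completes the proof. The main obstacle will be step two: verifying that $\phi$ is genuinely an involution on the entire non-good set. In particular, one must check that the toggle at $v$ does not alter the $s_d(2)$-left-odd ancestry status of descendants of $v$ in a way that destroys the pairing, and that boundary violators $n_v \in a(R) \cup b(R)$ are handled consistently across all maximal intervals of $R$. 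I expect this to demand a careful case analysis indexed by the maximal interval of $R$ containing $n_v$ and the chain of $s_d(2)$-degree ancestors above $v$, parallel to the reasoning underlying Claim \ref{claim:3inR}.
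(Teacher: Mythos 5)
Your overall strategy is the paper's: a stretched forest-difference formula, a sign-reversing involution on the non-good forests, and the observation that good forests are even. Your Step 1 is correct and in fact slightly cleaner than the paper's treatment --- the paper re-derives the stretched difference formula from Lemma \ref{lem:Lagrange-inversion} and an edge count, whereas you obtain it by applying Theorem \ref{thm:forest-difference} to the set $R(d)$ and converting parities via the identity $(n-k)+\sum d(v_i) \equiv m \equiv (n-k)/d + \sum n_i \pmod 2$, which is valid since the identity holds pointwise for each forest. Step 3 is also correct and matches the remark preceding Theorem \ref{thm:single-forest}.

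The genuine gap is Step 2, which you yourself flag as the main obstacle: the involution is never actually constructed, and it is the entire substance of the paper's proof (Algorithm \ref{alg:algorithm}, Lemma \ref{lemma:characterization}, and Lemma \ref{lem:algorithm-facts}). Three things are missing concretely. First, the restructuring operation itself: ``toggle $n_v$ across a boundary'' must be realized as a contraction (a degree-$s_d(2)$ vertex absorbs the children of its non-leaf left-most child) or an uncontraction (a vertex sheds its left-most $s_d(n-1)$ children to a newly created left-most child); this replaces a pair $(2,n)$ of internal-sequence entries by the single entry $n+1$ or vice versa rather than adjusting one entry, and one must check that the operation at a type-(ii) violator produces exactly a type-(i) violator configuration and conversely --- this is where $1 \in R$, the no-exposed-odds hypothesis, and the endpoint sets $a(R)$, $b(R)$ actually do their work. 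Second, preservation of the ordering constraints: your claim that the increasingly and min-first ordered cases follow ``because $\phi$ acts only in a bounded neighbourhood of $v$'' is not sufficient; the paper must insert the absorbed children in a specific position (before the original $d$ right-most children) precisely so that the $\ell_{\max}$-increasing and $\ell_{\min}$-first conditions survive, and this is checked case by case in Lemma \ref{lem:algorithm-facts}. Third, involutivity: your canonical anchor (first violator in depth-first pre-order) differs from the paper's (first eligible vertex on the right-most path, then recursion into the forest obtained by deleting that path and the relevant left-children), and the paper's choice is engineered so that the operation leaves the decision data at all earlier vertices unchanged and so that $s_d(2)$-left-odd ancestry is stable under the recursion (Lemma \ref{lemma:characterization}). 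Whether a pre-order anchoring can be made involutive is exactly the claim you leave unverified, so as written the proposal is a correct plan rather than a proof.
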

Notice that Theorem \ref{thm:single-forest} is the just the special case $d=1$ of Theorem \ref{thm:single-forest-stretched}. We also note that  Theorem \ref{thm:single-forest-stretched} is vacuously true if  $d \nmid (n-k)$.  In those cases, we will show that ${n \brace k}^{-1}_{R(d)} = {n \brack k}^{-1}_{R(d)} = L(n,k)^{-1}_{R(d)} = 0$ and the forest classes are empty. 

We illustrate these definitions and theorems in the case where $R={\mathbb N}$. An ordered tree $T$ is ${\mathbb N}$-good if and only if every non-leaf vertex has two children, the left-most of which is a leaf. It follows that $|{\cal T}^{\rm i.o., good}_{\mathbb N}(n)|=(n-1)!$, because any of the $(n-1)!$ proper leaf-labelings in which the right-most child of the non-leaf vertex furthest from the root gets label $n$ yields an ${\mathbb N}$-good increasingly ordered tree. On the other hand $|{\cal T}^{\rm m.o., good}_{\mathbb N}(n)|=1$, because for $T$ to be min-first ordered, the leaves must be labeled in increasing order when read counterclockwise from the root. Finally we have $|{\cal T}^{\rm l.o., good}_{\mathbb N}(n)|=n!$, because in this case there is no restriction on the leaf-labeling. See Figure \ref{fig-R=N}.

\begin{figure}[ht!]
\begin{center}
\begin{tikzpicture}[scale=.9,every node/.style={scale=0.95}]

\node at (0,5) {(a)};

\node (a1) at (1,5) [circle,draw] {};
\node (a2) at (0.5,4) [circle,draw,label=270:$*$] {};
\node (a3) at (1.5,4) [circle,draw] {};
\node (a4) at (1,3) [circle,draw,label=270:$*$] {};
\node (a5) at (2,3) [circle,draw] {};
\node (a6) at (1.5,2) [circle,draw,label=270:$*$] {};
\node (a7) at (2.5,2) [circle,draw,label=270:$4$] {};

\foreach \from/\to in {a1/a2,a1/a3,a3/a4,a3/a5,a5/a6,a5/a7} \draw (\from) -- (\to);

\node at (5,5) {(b)};
\node (b1) at (6,5) [circle,draw] {};
\node (b2) at (5.5,4) [circle,draw,label=270:$1$] {};
\node (b3) at (6.5,4) [circle,draw] {};
\node (b4) at (6,3) [circle,draw,label=270:$2$] {};
\node (b5) at (7,3) [circle,draw] {};
\node (b6) at (6.5,2) [circle,draw,label=270:$3$] {};
\node (b7) at (7.5,2) [circle,draw,label=270:$4$] {};

\foreach \from/\to in {b1/b2,b1/b3,b3/b4,b3/b5,b5/b6,b5/b7} \draw (\from) -- (\to);

\node at (10,5) {(c)};
\node (c1) at (11,5) [circle,draw] {};
\node (c2) at (10.5,4) [circle,draw,label=270:$*$] {};
\node (c3) at (11.5,4) [circle,draw] {};
\node (c4) at (11,3) [circle,draw,label=270:$*$] {};
\node (c5) at (12,3) [circle,draw] {};
\node (c6) at (11.5,2) [circle,draw,label=270:$*$] {};
\node (c7) at (12.5,2) [circle,draw,label=270:$*$] {};

\foreach \from/\to in {c1/c2,c1/c3,c3/c4,c3/c5,c5/c6,c5/c7} \draw (\from) -- (\to);
\end{tikzpicture}
\end{center}
\caption{(a) $|{\cal T}^{\rm i.o., good}_{\mathbb N}(4)|=(4-1)!$; (b) $|{\cal T}^{\rm m.o., good}_{\mathbb N}(4)|=1$; and (c) $|{\cal T}^{\rm l.o., good}_{\mathbb N}(4)|=4!$.}
\label{fig-R=N}
\end{figure}

Thus Theorem \ref{thm:single-forest} tells us 
\[
{n \brace 1}^{-1}_{\mathbb N} = (-1)^{n-1}|{\cal T}^{\rm i.o., good}_{\mathbb N}| = (-1)^{n-1}{n \brack 1},
\]
\[{n \brack 1}^{-1}_{\mathbb N} = (-1)^{n-1} |{\cal T}^{\rm m.o., good}| = (-1)^{n-1}{n \brace 1},\text{ and}
\]
\[L(n,1)^{-1}_{\mathbb N} = (-1)^{n-1}|{\cal T}^{\rm l.o., good}_{\mathbb N}| = (-1)^{n-1}L(n,1),
\]
matching the first columns of the identities $[{n \brace k}]_{n,k \geq 1}^{-1} = [(-1)^{n-k} {n \brack k}]_{n,k \geq 1}$, etc. 

Some other specific illustrations of these theorems are discussed in Section \ref{sec:conclusion}. 

\section{Preliminary lemmas} \label{sec:preliminaries}

Let $a = (a_n)_{n \geq 1}$ be a sequence of complex numbers with $a_1 \neq 0$. For $n, k \geq 1$ set
\begin{equation} \label{eq:ank-from-an}
a_{n,k} = \sum \left\{ a_{|P_1|} a_{|P_2|} \cdots a_{|P_k|} : \{P_1, \ldots, P_k\} \mbox{ a set partition of } [n] \right\}
\end{equation}
and set 
$$
A_a = [a_{n,k}]_{n,k \geq 1}.
$$
Note that $A_a$ is lower triangular as no partition of $[n]$ has more than $n$ parts, and also that $a_{n,n}=a_1^n$, so that $A_a$ is invertible if and only if $a_1 \neq 0$.

All the $R$-restricted numbers we consider are of the form $a_{n,k}$ for certain choices of $a_n$.  For example, note that ${n \brack k}_R = a_{n,k}$ where $a_n = (n-1)!{\bf 1}_{\{n \in R\}}$.  (Here and throughout we use ${\bf 1}_S$ for the indicator function of the event $S$, the function which takes value $1$ if $S$ occurs and is $0$ otherwise.) This may be seen as follows.  To obtain a partition of $[n]$ into $k$ non-empty cycles of the allowed sizes we first pick a partition of $[n]$ into $k$ non-empty sets $\{P_1, \ldots, P_k\}$ and then for each block $P_i$ choose one of the cycles that may be formed from the elements of $P_i$.  There are $a_{|P_1|}a_{|P_2|} \cdots a_{|P_k|}$ ways of completing the second step: if $P_i$ is of an allowed size, there are $a_{|P_i|} = (|P_i| - 1)!$ possible cycles and otherwise there are $a_{|P_i|} = 0$ possible cycles.  Similarly, ${n \brace k}_R = a_{n,k}$ where $a_n = {\bf 1}_{\{n \in R\}}$, and $L(n,k)_R = a_{n,k}$ where $a_n = n!{\bf 1}_{\{n \in R\}}$.  In all three cases $a_1 \neq 0$ and $A_a$ is invertible if and only if $1 \in R$.

All of our numbers ${n \brace k}_R$, ${n \brace k}^{-1}_R$, etc.\! are thus entries of matrices of the form $A_a$ or $A_a^{-1}$. As we shall see these matrices are submatrices of matrices belonging to the exponential Riordan group.  We now define this group and see that its law of multiplication gives a nice approach to calculating the entries of $A_a^{-1}$.

Given a sequence of complex numbers $f = (f_n)_{n \geq 0}$ we define the {\em exponential generating function of $f$} to be $f(x) = \sum_{n=0}^\infty f_n x^n/n!$. Given $f(x) = \sum_{n=0}^\infty f_n x^n/n!$, let $\ord (f(x)) := \min \{n \geq 0 : f_n \neq 0\}$.  If $f(x)$ and $g(x)$ are exponential generating functions with $\ord(f(x)) = 0$ and $\ord(g(x)) = 1$ then for $k \geq 0$ let $(M_{n,k})_{n \geq 0}$ be the sequence whose exponential generating function is $f(x) g^k(x)/k!$ (that is, $\sum_{n=0}^\infty M_{n,k} x^n/n! = f(x) g^k(x)/k!$). Denote by $[f(x),g(x)]$ the infinite matrix $[M_{n,k}]_{n,k \geq 0}$.  

The \emph{exponential Riordan group} (see e.g. \cite[Chapter 8]{Barry}) is the group of all matrices of the form $[f(x),g(x)]$ with $\ord(f(x)) = 0$ and $\ord(g(x)) = 1$.  The binary operation of this group is matrix multiplication and is computed by $[f(x),g(x)] [u(x),v(x)] = [f(x) u(g(x)), v(g(x))]$.  The identity element is the identity matrix $I = [1,x]$ and $[f(x),g(x)]^{-1} = [1/f(g^{-1}(x)),g^{-1}(x)]$.  Here $g^{-1}(x)$ is the {\em reversion} or  compositional inverse of $g(x)$, the unique power series satisfying $g(g^{-1}(x))=g^{-1}(g(x))=x$.

Let $a(x)$ be the exponential generating function of the sequence $a = (a_n)_{n \geq 1}$. It follows from (\ref{eq:ank-from-an}) and the exponential formula (see e.g.\! \cite[Chapter 3]{Wilf}) that the exponential generating function of the sequence $(a_{n,k})_{n \geq 1}$ of the entries of the $k$th column of $A_a$ is $a^k(x)/k!$.  Thus $A_a = [1,a(x)]_{0,0}$, the matrix obtained by removing the $0$th row and $0$th column of the exponential Riordan matrix $[1,a(x)]$. Note that the exponential generating function of the $0$th column of $[1, a(x)]$ is $1$ so the $(n,0)$ entry of $[1,a(x)]$ is ${\bf 1}_{\{n=0\}}$.  Thus if $b = (b_n)_{n \geq 1}$ is another sequence with $b_1 \neq 0$ and exponential generating function $b(x)$, then 
$$
A_a A_b = [1,a(x)]_{0,0}[1,b(x)]_{0,0} = ([1,a(x)][1,b(x)])_{0,0} = [1,b(a(x))]_{0,0} = A_c
$$ 
where, by the exponential Riordan group multiplication law, $c = (c_n)_{n \geq 1}$ has exponential generating function $b(a(x))$. If $b(x) = a^{-1}(x)$, $A_a A_b = I = [1,x]$. This gives the following fundamental lemma.
\begin{lemma} \label{lem:Riordan} Let $a = (a_n)_{n \geq 1}$ be a sequence of complex numbers with $a_1 \neq 0$ and let $a(x) = \sum_{n=1}^\infty a_n x^n/n!$ be its exponential generating function.  Let \[A_a = [a_{n,k}]_{n,k \geq 1}\] where 
\[a_{n,k} = \sum \left\{ a_{|P_1|} a_{|P_2|} \cdots a_{|P_k|} : \{P_1, \ldots, P_k\} \mbox{ a set partition of } [n] \right\}.\] 
Let $(b_n)_{n \geq 1}$ be the sequence of complex numbers whose exponential generating function is $a^{-1}(x)$. 
Then 
\[A^{-1}_a  = A_b = [b_{n,k}]_{n,k \geq 1}\] 
with 
\begin{equation} \label{eq:building-bnk}
b_{n,k} = \sum \left\{ b_{|P_1|} b_{|P_2|} \cdots b_{|P_k|} : \{P_1, \ldots, P_k\} \mbox{ a set partition of } [n] \right\}.
\end{equation}
\end{lemma}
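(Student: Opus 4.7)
The plan is to deduce the lemma directly from the exponential Riordan group machinery assembled in the paragraphs preceding the statement; all the essential ingredients are already in place, and what remains is to stitch them together cleanly. Since $a_1 \neq 0$, the EGF $a(x)$ has order $1$ and thus admits a compositional inverse $a^{-1}(x)$, also of order $1$. I would define $b = (b_n)_{n \geq 1}$ to be the sequence whose EGF is $b(x) = a^{-1}(x)$; then $b_1 = 1/a_1 \neq 0$, so $A_b$ is a well-defined lower-triangular matrix, and its entries expand by the exponential formula to give precisely the expression (\ref{eq:building-bnk}).

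The core step is to show $A_a A_b = I$, the identity matrix indexed by $\mathbb{N}$. By the exponential formula, the EGF of the $k$-th column of $A_a$ is $a^k(x)/k!$, so $A_a$ is precisely the submatrix $[1, a(x)]_{0,0}$ obtained from the Riordan matrix $[1, a(x)]$ by deleting the $0$-th row and column, and analogously $A_b = [1, b(x)]_{0,0}$. Since the $0$-th row of $[1, a(x)]$ is $(1,0,0,\ldots)$ (because $\ord(a^k(x)/k!) = k \geq 1$ for $k \geq 1$) and the $0$-th column of $[1, b(x)]$ is $(1,0,0,\ldots)^T$, deletion of the $0$-th row and column commutes with matrix multiplication, i.e. $[1, a(x)]_{0,0}\,[1, b(x)]_{0,0} = \bigl([1, a(x)]\,[1, b(x)]\bigr)_{0,0}$. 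The Riordan composition law then yields $[1, a(x)][1, b(x)] = [1, b(a(x))] = [1, x] = I$, and truncating gives $A_a A_b = I$, so that $A_a^{-1} = A_b$.

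The only step requiring any real care --- and hence the one mild obstacle --- is the verification that the $(\cdot)_{0,0}$ truncation commutes with matrix multiplication in this setting; this is a one-line check from the trivial form of the relevant row and column. After that, everything else is routine bookkeeping within the exponential Riordan group, and the combinatorial expansion (\ref{eq:building-bnk}) of the entries of $A_b$ is read off immediately from the exponential formula applied to the sequence $b$, exactly as it was for $A_a$.
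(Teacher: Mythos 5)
Your proposal is correct and follows essentially the same route as the paper, which likewise realizes $A_a$ as the truncation $[1,a(x)]_{0,0}$ of an exponential Riordan matrix, observes that truncation commutes with multiplication because the deleted row and column are trivial, and applies the composition law to get $[1,a(x)][1,a^{-1}(x)]=[1,x]$. One pedantic note: for the truncation of the product $MN$ to commute you need $M_{n,0}N_{0,k}=0$ for $n,k\geq 1$, i.e.\ the $0$th \emph{column} of $M=[1,a(x)]$ and the $0$th \emph{row} of $N=[1,b(x)]$ to vanish away from the $(0,0)$ entry --- the transpose of the pair you cite --- but since both matrices have the form $[1,g(x)]$ with $\ord(g)=1$, all four of these are $(1,0,0,\ldots)$ and the argument goes through unchanged.
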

As an example, we apply this lemma to the case $a_n =1$ in which $A_a = [{n \brace k}]_{n,k \geq 1}$.  Since $a(x) = \exp(x)-1$, we have $b(x) = a^{-1}(x) = \log(1+x) = \sum_{n=1}^\infty (-1)^{n-1} x^n/n$, which is the exponential generating function of $b_n = (-1)^{n-1} (n-1)!$.  A simple calculation shows that the sign of $b_{n,k}$ is $(-1)^{n-k}$ and that $b_{n,k} = (-1)^{n-k} {n \brack k}$.  (See the method of calculation of ${n \brack k}_R$ given in the second paragraph of this section.)  Applying Lemma \ref{lem:Riordan} we obtain the classical result $[{n \brace k}]^{-1}_{n,k \geq 1} = [(-1)^{n-k}{n \brack k}]_{n,k \geq 1}$ alluded to in the introduction. 
The well known inverses of $[{n \brack k}]_{n,k \geq 1}$ and $[L(n,k)]_{n,k\geq 1}$ can be obtained similarly.  

All sequences $a = (a_n)_{n\geq 1}$ that we consider will consist of non-negative integers with $a_1 = 1$. This ensures that the entries of $A_a^{-1}$ are integers, a (perhaps minimum) requirement for a combinatorial interpretation of those entries.  Indeed, if we examine the formula for $a_{n,k}$ in terms of the $a_n$ we see that the matrix $A_a$ will in this case be lower triangular, have integer entries, and have all $1$'s down the diagonal.  Thus, by the co-factor formula for the inverse of a matrix, $A_a^{-1}$ will also have the same three properties.

We will also need the following combinatorial Lagrange inversion formula. If $a_1 \neq 0$ and a $T$ is a phylogenetic tree with $n$ leaves and $m$ non-leaf vertices then we define the {\em $a$-weight} of $T$ to be
$$
w_a(T) = (-1)^m a_1^{-(m+n)}\prod\left\{a_{d(v)}:v \in V(T),~\mbox{$d(v) \neq 0$}\right\}. 
$$
Note that if a tree $T$ consists of just a root then $w_a(T) = 1/a_1$, as the root is considered a leaf. 
The following result has appeared numerous times in the literature.  It is the case $r=1$ of the multi-variable generalization Theorem 3.3.9 of \cite{Ginzburg-Kapranov} and that paper cites earlier occurrences: \cite[Thm. 3.10]{Wright} where it is attributed to Towber and \cite[Thm. 2.13]{MacKay-et-al}.  The Ph.D. theses of Drake and Taylor contain generalizations of the single variable case: \cite[Thm. 1.3.3]{Drake} and \cite[Sec 3.2]{Taylor}. We include a sketch of a proof for completeness.

\begin{lemma} \label{lem:Lagrange-inversion}
If $a(x)=\sum_{n \geq 1} a_n x^n/n!$ (with $a_1\neq 0$) and $a^{-1}(x) = \sum_{n \geq 1} b_n x^n/n!$ then for $n \geq 1$
$$
b_n = \sum_{T \in {\cal T}(n)} w_a(T).
$$
\end{lemma}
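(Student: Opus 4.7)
The plan is to prove the identity by strong induction on $n$, comparing a recursive decomposition of trees in $\mathcal{T}(n)$ with the coefficient-extraction in the functional equation $a(b(x)) = x$.

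For the base case $n=1$, the only tree in $\mathcal{T}(1)$ is a single vertex (an isolated root, which is also a leaf), which has $m=0$ non-leaf vertices and contributes $w_a(T) = a_1^{-1}$. On the other side, $b_1 = 1/a_1$ follows directly from $a(b(x)) = x$.

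For the inductive step, I would compute both sides of the equation in a parallel way. First, writing $a(x) = a_1 x + \sum_{k\geq 2}a_k x^k/k!$ and extracting the coefficient of $x^n/n!$ in $a(b(x))=x$ for $n\geq 2$, the exponential formula identifies $[x^n/n!]\,b(x)^k/k!$ with $\sum_{\{S_1,\dots,S_k\}}\prod_i b_{|S_i|}$, where the sum runs over unordered set-partitions of $[n]$ into $k$ nonempty blocks. Solving for $b_n$ yields
\[
b_n \;=\; \sum_{k=2}^{n}\Bigl(-\tfrac{a_k}{a_1}\Bigr)\sum_{\{S_1,\dots,S_k\}}\prod_{i=1}^k b_{|S_i|}.
\]
Second, I would decompose $\sum_{T\in\mathcal{T}(n)}w_a(T)$ according to the degree $k\geq 2$ of the root (for $n\geq 2$ the root cannot be a leaf) and the induced partition $\{S_1,\dots,S_k\}$ of $[n]$ into the leaf-label sets of the $k$ unordered subtrees $T_1,\dots,T_k$. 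A direct check from the definition of $w_a$ shows the multiplicativity
\[
w_a(T) \;=\; -\tfrac{a_k}{a_1}\prod_{i=1}^k w_a(T_i),
\]
using that $T$ has $1+\sum_i m_i$ non-leaf vertices and $n=\sum_i |S_i|$ leaves, so the factor $a_1^{-(m+n)}$ splits as $a_1^{-1}\prod_i a_1^{-(m_i+|S_i|)}$. Applying the inductive hypothesis to each $\sum_{T_i\in\mathcal{T}(S_i)}w_a(T_i) = b_{|S_i|}$ (via the canonical relabelling bijection $\mathcal{T}(S_i)\cong\mathcal{T}(|S_i|)$) matches this second expression with the first, closing the induction.

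The only real point requiring care is the $k!$ bookkeeping: the functional equation naturally produces $b(x)^k$, whose coefficient extraction delivers ordered $k$-tuples of labeled subtrees, whereas $\mathcal{T}(n)$ has unordered children. The $1/k!$ from the Taylor expansion of $a$ absorbs this overcount exactly, leaving unordered set-partitions of $[n]$ on both sides. I would flag this as the step to state carefully but not the main conceptual obstacle, since it is a standard application of the exponential formula.
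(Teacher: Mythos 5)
Your proposal is correct and follows essentially the same route as the paper: both derive the recurrence for $b_n$ by extracting the coefficient of $x^n/n!$ from $a(b(x))=x$, and both verify that $\sum_{T\in\mathcal{T}(n)}w_a(T)$ satisfies the same recurrence via the root-degree decomposition and the multiplicativity $w_a(T)=-\tfrac{a_k}{a_1}\prod_i w_a(T_i)$. The only cosmetic difference is that the paper phrases the recurrence with compositions and multinomial coefficients divided by $k!$, while you pass directly to unordered set partitions via the exponential formula; these are the same bookkeeping.
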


\begin{proof} 
Solving $[x^n] (f(\sum_{n=1}^\infty b_n x^n/n!) - x) = 0$ for $b_n$ we get $b_1=1/a_1$ and the recurrence \[b_n = -a_1^{-1}\sum_{k=2}^n a_k \left(\sum _{(i_1, \ldots, i_k)} \frac{1}{k!}\binom{n}{i_1,\ldots,i_k}\prod_{j=1}^k b_{i_j}\right)\] for $n \geq 2$, where $\sum_{(i_1,\ldots,i_k)}$ is a sum over compositions $(i_1,\ldots,i_k)$ of $n$.

If $t_n = \sum_{T \in {\cal T}(n)} w_a(T)$ then $t_n$ satisfies the same initial condition and recurrence.  Indeed $t_1 = 1/a_1 = b_1$. For $n \geq 2$ each tree $T \in {\cal T}(n)$ is uniquely determined by the unordered collection of subtrees $T_1, \ldots, T_k$ rooted at the $k \geq 2$ children of its root.  For such a tree $T$, $w_a(T) = (-a_k/a_1) w_a(T_1) \cdots w_a(T_k)$. If tree $T_j$ has $i_j$ leaves, the sets of leaves of the $T_j$ form an unordered partition of $[n]$ into $k$ parts of sizes $i_1, \ldots, i_k$.  The recurrence follows by summing $w_a(T)$ first over $k$ and then over all such unordered partitions.
\end{proof}

We will use Lemmas \ref{lem:Riordan} and \ref{lem:Lagrange-inversion} to obtain Theorem \ref{thm:forest-difference}. The idea is this: for the first part of Theorem \ref{thm:forest-difference} (${n \brace k}_R^{-1}$) we choose $(a_n)_{n \geq 1}$ so that the matrix $A_a$ in Lemma \ref{lem:Riordan} is precisely $[{n \brace k}_R]_{n, k \geq 1}$. The appropriate choice is $a_n={\bf 1}_{\{n \in R\}}$. Lemma \ref{lem:Lagrange-inversion} allows us to conclude that $b_n$ (the $n$th entry in the first column of the inverse matrix) is a weighted sum of phylogenetic trees, and we argue that this is the same as a signed, but otherwise unweighted, sum of increasingly ordered trees. That is, $b_n$ is the difference between the cardinalities of two explicitly defined sets of increasingly ordered trees. From (\ref{eq:building-bnk}) we then conclude that $b_{n,k}$ is the difference between the cardinalities of two explicitly defined sets of increasingly ordered forests, as claimed. The only change in the approach to the other two parts of Theorem \ref{thm:forest-difference} is the choice of $a_n$.

We conclude this section by briefly discussing our approach to Theorems \ref{thm:single-forest} and \ref{thm:single-forest-stretched}. We discuss only the case $d=1$ here. Suppose that for some $R \subseteq {\mathbb N}$ with $1 \in R$ and with no exposed odds we can find, for each $n$, an involution of ${\mathcal T}^{\rm i.o.}_{R}(n)$ that in its orbits of size $2$ toggles between even and odd trees, and fixes precisely the set of $R$-good trees (which recall are all even; see the paragraph before the statement of Theorem \ref{thm:single-forest}). Using this involution we get from Theorem \ref{thm:forest-difference} (in the special case $k=1$) that ${n \brace 1}_R^{-1} = (-1)^{n-1}|{\mathcal T}^{\rm i.o., good}_{R}(n)|$. But then from Lemma \ref{lem:Riordan} (and in particular equation (\ref{eq:building-bnk})) we get that ${n \brace k}_R^{-1} = (-1)^{n-k}|{\mathcal F}^{\rm i.o., good}_{R}(n,k)|$. The key point here is that $(b_n)_{n \geq 1}$ is an alternating sequence, from which it follows that every summand contributing to the sum defining $b_{n,k}$ contributes the same sign --- $(-1)^{n-k}$ --- something which would not necessarily be the case if $(b_n)_{n \geq 1}$ was not alternating. Analogous phenomena hold for ${n \brack k}_R^{-1}$ and for $L(n,k)_R^{-1}$. So much of our proof will involve finding this involution, which we give in Algorithm \ref{alg:algorithm}, and proving that it has the correct properties, which is done in Lemma \ref{lem:algorithm-facts}. We need to add a little more to this argument to deal with sets of the form $R(d)$; this is also discussed in Section \ref{sec:proofs}.

\section{Proofs} \label{sec:proofs}

\subsection{Proof of Theorem \ref{thm:forest-difference}}

That the inverse matrices under discussion exist if and only if $1 \in R$ is evident. Let $R \subseteq \mathbb{N}$ with $1 \in R$, and let $(a_n)_{n \geq 1}$ be a sequence of non-negative integers with $a_1 = 1$.  Let $A_a^{-1} = [b_{n,k}]_{n,k \geq 1}$ (with the notation following that in Lemma \ref{lem:Riordan}).

For $T \in {\cal T}(n)$ with $m$ non-leaf vertices, $T$ has $n + m - 1$ edges, which we denote by $e(T)$. Adopting the convention $a_0=1$ we get from Lemma \ref{lem:Lagrange-inversion} that $b_n = (-1)^{n-1} \sum_{T \in {\cal T}(n)} N_a(T)$ where 
\[N_a(T) = (-1)^{e(T)} \prod_{v \in V(T)} a_{d(v)}.\] 
Note that if $T$ is turned into a tree with ordered children by assigning to each complete set of $k$ siblings one of $a_k$ possible orderings, then the number of such trees obtainable from $T$ is $|N_a(T)|$.

Let $a_n = {\bf 1}_{\{n \in R\}}$. Then if $T \in {\cal T}(n)$, $|N_a(T)| = 1$ if $T$ has all down-degrees in $R$ and $N_a(T)=0$ otherwise.  Thus 
$$
|{\cal T}_R^{\rm i.o., even}(n)| = \sum_{T \in {\cal T}^{\rm even}(n)} N_a(T)
$$ 
and 
$$
|{\cal T}_R^{\rm i.o., odd}(n)| = -\sum_{T \in {\cal T}^{\rm odd}(n)} N_a(T)
$$ 
as there is precisely one way to turn each $T \in {\cal T}(n)$ with all down-degrees in $R$ into an increasingly ordered tree. Thus 
$$
b_n = (-1)^{n-1}( |{\cal T}^{\rm i.o., even}(n)| - |{\cal T}^{\rm i.o., odd}(n)|).
$$  
We claim that $b_{n,k} = (-1)^{n-k}(|{\cal F}^{\rm i.o., even}(n,k)| - |{\cal F}^{\rm i.o., odd}(n,k)|)$, via equation (\ref{eq:building-bnk}). Indeed, a forest on $n$ leaves with $k$ components can be chosen in two stages. The first stage is to pick a partition $\{P_1, \ldots, P_k\}$ of the label set $[n]$, with say $|P_i| = n_i$. The second stage is to build for each $P_i$ a tree whose leaves are labeled with those $n_i$ labels.  Examine the term $b_{n_1} \cdots b_{n_k}$ of the sum for $b_{n,k}$. Since $(n_1-1) + \cdots (n_k - 1) = n-k$, this term is 
\[
(-1)^{n-k}\left(|{\cal T}^{\rm i.o., even}(n_1)| - |{\cal T}^{\rm i.o., odd}(n_1)|\right) \cdots \left(|{\cal T}^{\rm i.o., even}(n_k)| - |{\cal T}^{\rm i.o., odd}(n_k)|\right).
\]
The internal sequence of a forest has an even (odd) sum if and only if an even (odd) number of its trees have internal sequences with odd sum so $(-1)^{n-k} b_{n_1} \cdots b_{n_k}$ is the number of even forests whose trees have label sets $P_i$ minus the number of odd forests whose trees have label sets $P_i$.  Thus $b_{n,k} = (-1)^{n-k}(|{\cal F}^{\rm i.o., even}(n,k)| - |{\cal F}^{\rm i.o., odd}(n,k)|)$ as claimed.

We turn to the second statement in Theorem \ref{thm:forest-difference}. Let $a_n=(n-1)!{\bf 1}_{\{n \in R\}}$. Then if $T \in {\cal T}(n)$, $|N_a(T)|$ is the number of ways $T$ can be turned into a min-first ordered tree with all down-degrees in $R$.  Note that there are $0$ ways if $T$ has a vertex with down-degree not in $R$.  Thus 
\begin{eqnarray*}
|{\cal T}_R^{\rm m.o., even}(n)| & = & \sum_{T \in {\cal T}^{\rm even}(n)} N_a(T), \\ 
|{\cal T}_R^{\rm m.o., odd}(n)| & = & -\sum_{T \in {\cal T}^{\rm odd}(n)} N_a(T), \\
b_n & = & (-1)^{n-1}( |{\cal T}^{\rm m.o., even}(n)| - |{\cal T}^{\rm m.o., odd}(n)|),
\end{eqnarray*}
and
$$
b_{n,k}  =  (-1)^{n-k}(|{\cal F}^{\rm m.o., even}(n,k)| - |{\cal F}^{\rm m.o., odd}(n,k)|).
$$

Similarly if $a_n=n!{\bf 1}_{\{n \in R\}}$ then 
$$
b_n = (-1)^{n-1}( |{\cal T}^{\rm l.o., even}(n)| - |{\cal T}^{\rm l.o., odd}(n)|
$$ 
and 
$$
b_{n,k} = (-1)^{n-k}(|{\cal F}^{\rm l.o., even}(n,k)| - |{\cal F}^{\rm l.o., odd}(n,k)|).
$$

\subsection{Proofs of Theorem \ref{thm:single-forest-stretched} and Corollary \ref{cor:special-cases}}

Recall that Theorem \ref{thm:single-forest} is the special case $d=1$ of Theorem \ref{thm:single-forest-stretched}, so our focus in this section is Theorem \ref{thm:single-forest-stretched}. 

All the results in Theorem \ref{thm:single-forest-stretched} are obtained as follows.  We define an involution on increasingly (min-first, linearly) ordered phylogenetic trees with down-degrees in $R$ (or $R(d)$) that maps odd trees to even trees and vice versa and we show that the trees that are fixed by this involution are precisely the $R$-good ($R(d)$-good) trees in that class. Since good trees are even this means that $b_n = (-1)^{n-1} |{\cal T}^{\rm i.o., good}_R(n)|$ and $b_{n,k} = (-1)^{n-k} |{\cal F}^{\rm i.o., good}_R(n,k)|$, (or $b_n = (-1)^{(n-1)/d} |{\cal T}^{\rm i.o., good}_{R(d)}(n)|$ and $b_{n,k} = (-1)^{(n-k)/d} |{\cal F}^{\rm i.o., good}_{R(d)}(n,k)|$), etc.

The image of a tree under this involution, whether the tree is increasingly, min-first, or linearly ordered, is obtained by applying the \emph{same} algorithm, Algorithm \ref{alg:algorithm} below. We will describe this algorithm and derive its properties for general $d$. The algorithm is expressed in terms of $s_d(n)=d(n-1)+1$.  Since $s_1(n)=n$, the special case $d=1$ of both the algorithm and the analysis can be recovered by reading ``$s_d(n)$'' throughout as ``$n$''. 

\begin{alg} \label{alg:algorithm}
Let $R \subseteq \mathbb{N}$ with $1 \in R$ have no exposed odds and let $d \geq 1$.

Input: A tree $T$ in ${\cal T}^{\rm i.o.}_{R(d)}$ (${\cal T}^{\rm m.o.}_{R(d)}$, ${\cal T}^{\rm l.o.}_{R(d)}$).

Output: A tree $A(T)$ in ${\cal T}^{\rm i.o.}_{R(d)}$ (${\cal T}^{\rm m.o.}_{R(d)}$, ${\cal T}^{\rm l.o.}_{R(d)}$, respectively).

\begin{enumerate}

\item (Initial phase.) Let $v_1, v_2, \ldots, v_k$ be the unique right-most path in $T$ from root $v_1$ to leaf $v_k$, i.e. $v_{j+1}$ is the right-most child of $v_j$ for $1 \leq j <k$. Consider each vertex $v_j$ in this path in increasing order of $j$ for $1 \leq j \leq k-1$.

\begin{enumerate} 

\item If $v_j$ has $s_d(2)$ children, let $v'_j$ be the left-most child of $v_j$. If $v'_j$ is not a leaf and $v'_j$ has $s_d(n)$ children for $n \not \in b(R)$, remove vertex $v'_j$ and all edges adjacent to it and then make every child of $v'_j$ a child of $v_j$.   The vertex $v_j$ now has $s_d(n+1)$ children: the former children of $v'_j$ and the original $d$ right-most children of $v_j$.  Linearly order these children as follows.  Let each set of children inherit the linear ordering they had originally and place the former children of $v'_j$ before the original $d$ right-most children of $v_j$. Leave the orderings on the children of all other vertices $v \neq v_j, v'_j$ unchanged.  Let $A(T)$ be the resulting tree.

\item If $v_j$ has $s_d(n)$ children for $n > 2$ where $n \not \in a(R)$: remove the edges between $v_j$ and its left-most $s_d(n-1)$ children, create a new vertex $v'_j$ to be the parent of these children, and make $v'_j$ a child of $v_j$.  Let the $s_d(n-1)$ children of $v'_j$ inherit the linear ordering they were assigned as children of $v_j$.  Make $v'_j$ be the left-most child of $v_j$ and let the other $d$ children of $v_j$ retain the linear ordering they had before.  Now $d(v_j) = s_d(2)$. Leave the orderings on the children of all other vertices $v \neq v_j, v'_j$ unchanged. Let $A(T)$ be the resulting tree.

\end{enumerate}

\item (Recursive phase.) Suppose now that for all $1 \leq j \leq k-1$, $v_j$ fails both criteria in step 1.  Remove $v_1, \ldots, v_k$ and all edges adjacent to these vertices.  If $v_j$ has $s_d(2) = d+1$ children, also remove the left-most child $v'_j$ of $v_j$ and all edges adjacent to $v'_j$.  This leaves behind a possibly empty forest $F$.

If $F$ is not empty consider its component trees $T'$ in increasing order of the $\ell_{\max}$ label on their root (or the $\ell_{\min}$ label if we are dealing with min-first ordered trees). If there is any tree $T'$ for which the algorithm, when applied to $T'$, would produce a tree $A(T')\neq T'$ then replace the first such $T'$ in $T$ by $A(T')$ and let $A(T)$ be the resulting tree.

If $F$ is empty, or if the algorithm would fix each tree $T'$ in $F$, let $A(T) = T$.

\end{enumerate}
\end{alg}

Note that in the recursive phase the component trees $T'$ are not necessarily {\em properly} leaf-labeled. By ``apply the algorithm to $T'$'' what we formally mean is ``for each $i$ replace the $i$th largest leaf label of $T'$ with the label $i$, to obtain a new tree $T''$ that is properly labeled; then apply the algorithm to $T''$; and then, for each $i$, replace the label $i$ in $A(T'')$ with the $i$th largest leaf label of $T'$''.

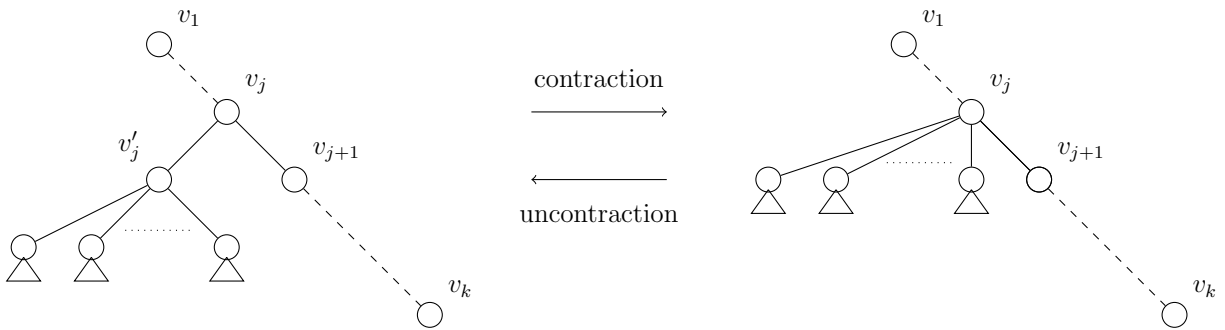
\begin{figure}[ht!]
\begin{center}

\begin{tikzpicture}[scale=.9,every node/.style={scale=0.85}]

\node (v1) at (0,-2) [circle,draw, label=45:$v_1$] {};
\node (v2) at (1,-3) [circle,draw, label=45:$v_j$] {};
\node (v3) at (2,-4) [circle,draw, label=45:$v_{j+1}$] {};
\node (v4) at (4,-6) [circle,draw, label=45:$v_k$] {};
\node (v5) at (0,-4) [circle,draw, label=135:$v'_j$] {};
\node (v6) at (-2,-5) [circle,draw] {};
\node (v7) at (-1,-5) [circle,draw] {};
\node (v8) at (1,-5) [circle,draw] {};

\foreach \from/\to in {v2/v3,v2/v5,v5/v6,v5/v7,v5/v8} \draw (\from) -- (\to);

\foreach \from/\to in {v1/v2,v3/v4} \draw[dashed] (\from) -- (\to);

\draw[dotted] (-.5,-4.75) -- (.5,-4.75);

\draw (-2,-5.15) -- (-2.25,-5.5) -- (-1.75,-5.5) -- (-2,-5.15);
\draw (-1,-5.15) -- (-1.25,-5.5) -- (-.75,-5.5) -- (-1,-5.15);
\draw (1,-5.15) -- (.75,-5.5) -- (1.25,-5.5) -- (1,-5.15);

\draw [->] (5.5,-3) -- (7.5,-3);
\draw [<-] (5.5,-4) -- (7.5,-4);
\node at (6.5,-2.5) {contraction};
\node at (6.5,-4.5) {uncontraction};

\node (v1) at (11,-2) [circle,draw, label=45:$v_1$] {};
\node (v2) at (12,-3) [circle,draw, label=45:$v_j$] {};
\node (v3) at (13,-4) [circle,draw, label=45:$v_{j+1}$] {};
\node (v4) at (15,-6) [circle,draw, label=45:$v_k$] {};
\node (v5) at (12,-4) [circle,draw] {};
\node (v6) at (9,-4) [circle,draw] {};
\node (v7) at (10,-4) [circle,draw] {};
\node (v8) at (13,-4) [circle,draw] {};

\foreach \from/\to in {v2/v3,v2/v5,v2/v6,v2/v7,v2/v8} \draw (\from) -- (\to);

\foreach \from/\to in {v1/v2,v3/v4} \draw[dashed] (\from) -- (\to);

\draw[dotted] (10.75,-3.75) -- (11.75,-3.75);

\draw (9,-4.15) -- (8.75,-4.5) -- (9.25,-4.5) -- (9,-4.15);
\draw (10,-4.15) -- (9.75,-4.5) -- (10.25,-4.5) -- (10,-4.15);
\draw (12,-4.15) -- (11.75,-4.5) -- (12.25,-4.5) -- (12,-4.15);

\end{tikzpicture}

\end{center}
\caption{Contraction and uncontraction in the initial phase ($d=1$).}
\label{fig-contract_uncontract}
\end{figure}
We refer to the operation in (1a) as {\em contraction} at $v_j$ {\em towards} $v'_j$, because it corresponds to the usual graph-theoretic operation of contracting the edge $v_j v'_j$.  We refer to the operation in (1b) as {\em uncontraction} at $v_j$ {\em away from} $v_{j+1}$. See Figure \ref{fig-contract_uncontract}.

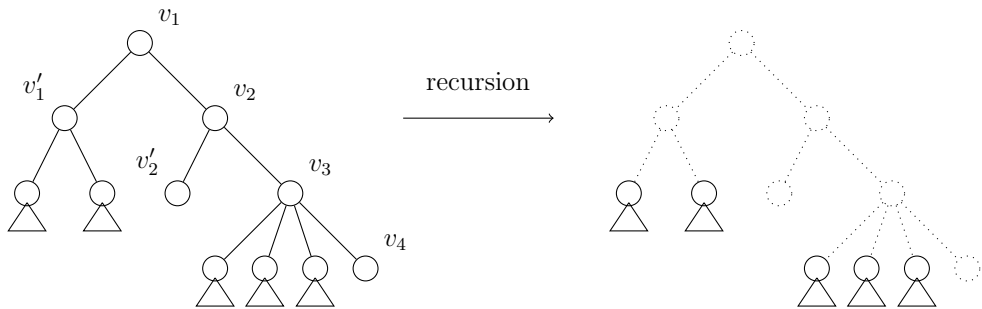
\begin{figure}[ht!]
\begin{center}
\begin{tikzpicture}[scale=1,every node/.style={scale=0.85}]
\node (v1) at (0,0) [circle,draw,label=45:$v_1$] {};
\node (v2) at (1,-1) [circle,draw,label=45:$v_2$] {};
\node (v3) at (2,-2) [circle,draw,label=45:$v_3$] {};
\node (v4) at (3,-3) [circle,draw, label=45:$v_4$] {};
\node (v5) at (-1,-1) [circle,draw, label=135:$v'_1$] {};
\node (v6) at (-1.5,-2) [circle,draw] {};
\node (v7) at (-.5,-2) [circle,draw] {};
\node (v8) at (.5,-2) [circle,draw, label=135:$v'_2$] {};
\node (v9) at (1,-3) [circle,draw] {};
\node (v10) at (1.66,-3) [circle,draw] {};
\node (v11) at (2.33,-3) [circle,draw] {};

\foreach \from/\to in {v1/v2,v2/v3,v3/v4,v1/v5,v5/v6,v5/v7,v2/v8,v3/v9,v3/v10,v3/v11} \draw (\from) -- (\to);

\draw (-1.5,-2.125) -- (-1.75,-2.5) -- (-1.25,-2.5) -- (-1.5,-2.125);
\draw (-.5,-2.125) -- (-.75,-2.5) -- (-.25,-2.5) -- (-.5,-2.125);
\draw (1,-3.125) -- (.75,-3.5) -- (1.25,-3.5) -- (1,-3.125);
\draw (1.66,-3.125) -- (1.41,-3.5) -- (1.91,-3.5) -- (1.66,-3.125);
\draw (2.33,-3.125) -- (2.08,-3.5) -- (2.58,-3.5) -- (2.33,-3.125);

\draw [->] (3.5,-1) -- (5.5,-1);
\node at (4.5,-.5) {recursion};

\node (v1) at (8,0) [draw, dotted, circle] {};
\node (v2) at (9,-1) [draw,dotted,circle] {};
\node (v3) at (10,-2) [draw,dotted,circle] {};
\node (v4) at (11,-3) [circle,dotted,draw] {};
\node (v5) at (7,-1) [draw,dotted,circle] {};
\node (v6) at (6.5,-2) [circle,draw] {};
\node (v7) at (7.5,-2) [circle,draw] {};
\node (v8) at (8.5,-2) [draw, dotted,circle] {};
\node (v9) at (9,-3) [circle,draw] {};
\node (v10) at (9.66,-3) [circle,draw] {};
\node (v11) at (10.33,-3) [circle,draw] {};

\foreach \from/\to in {v1/v2,v2/v3,v3/v4,v1/v5,v5/v6,v5/v7,v2/v8,v3/v9,v3/v10,v3/v11} \draw[dotted] (\from) -- (\to);

\draw (6.5,-2.125) -- (6.25,-2.5) -- (6.75,-2.5) -- (6.5,-2.125);
\draw (7.5,-2.125) -- (7.25,-2.5) -- (7.75,-2.5) -- (7.5,-2.125);
\draw (9,-3.125) -- (8.75,-3.5) -- (9.25,-3.5) -- (9,-3.125);
\draw (9.66,-3.125) -- (9.41,-3.5) -- (9.91,-3.5) -- (9.66,-3.125);
\draw (10.33,-3.125) -- (10.08,-3.5) -- (10.58,-3.5) -- (10.33,-3.125);

\end{tikzpicture}
\end{center}
\caption{Recursive phase ($d=1$, $R = \{1,2\} \cup \{4,5,6\}$).}
\label{fig-recursive-phase}
\end{figure}

See Figure \ref{fig-recursive-phase} for an example of the recursive phase of the algorithm when $d=1$ and $R = \{1,2\} \cup \{4,5,6\}$.  The right-most path is $v_1, v_2, v_3, v_4$.  Since $d(v_1)=2$ and $d(v'_1) = 2 \in b(R) = \{2,6\}$, since $d(v'_2) = 0$, and since $d(v_3) = 4 \in a(R) = \{4\}$, the algorithm cannot perform an operation in the initial phase.  It removes $v_1,\ldots,v_4$ and $v'_1,v'_2$ and recursively evaluates trees in the resulting forest.

We establish some useful facts about Algorithm \ref{alg:algorithm} in Lemmas \ref{lemma:characterization} and \ref{lem:algorithm-facts}, after which the proof of Theorem \ref{thm:single-forest-stretched} will be quite short.

\begin{lemma} \label{lemma:characterization}
Suppose that $T$ is a tree that produces a forest $F$ via the recursive phase of Algorithm \ref{alg:algorithm}, and let $v$ be a vertex in $F$.  Then $v$ has $s_d(2)$-left-odd ancestry in $T$ if and only if $v$ has $s_d(2)$-left-odd ancestry in $F$.
\end{lemma}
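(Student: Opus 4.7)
The plan is to analyze which vertices of an $s_d(2)$-left-odd ancestry chain can lie in the set of removed vertices, and then to show that in both directions of the biconditional, a certifying chain in one of $T$, $F$ can be converted to a certifying chain in the other (possibly with a modified top). Call the set of removed vertices $X = \{v_1,\ldots,v_k\} \cup \{v'_j : d(v_j)=s_d(2)\}$. A key preliminary observation is that on any surviving vertex $u \notin X$, the child set of $u$ and its linear ordering are unchanged: any removed child of $u$ would be some $v_\ell$ (forcing $u = v_{\ell-1} \in X$) or some $v'_\ell$ (forcing $u = v_\ell \in X$), a contradiction.

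First I would establish a structural claim: in any chain $u_1,\ldots,u_k=v$ certifying $s_d(2)$-left-odd ancestry in $T$, the only $u_i$ that can lie in $X$ are $u_1$ and $u_2$, and they must appear together as $u_1=v_j$, $u_2=v'_j$ for some $j$ with $d(v_j)=s_d(2)$. Indeed, if $u_i = v_j$ with $i \geq 2$, then the parent of $u_i$ in $T$, namely $v_{j-1}$, equals $u_{i-1}$, yet $v_j$ is the rightmost child of $v_{j-1}$ while the chain demands it be the leftmost, forcing $d(v_{j-1})=1 \neq s_d(2)$, which contradicts $d(u_{i-1})=s_d(2)$. Similarly, $u_i = v'_j$ forces $u_{i-1} = v_j$, hence $i-1=1$ by what was just shown.

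For the forward direction, if no $u_i$ is removed, the chain transfers verbatim to $F$, the internal conditions are preserved by the preliminary observation, and the top condition at $u_1$ carries over because either $u_1$'s parent in $T$ survives in $F$ (with unchanged children and ordering) or $u_1$ becomes a root in $F$ and trivially satisfies the condition. If instead $u_1=v_j$ and $u_2=v'_j$ are removed, which forces $k \geq 4$ since $v = u_k \in F$, then the truncated chain $u_3,\ldots,u_k=v$ still works in $F$: its length $k-2$ is even, the degree and leftmost-child conditions are inherited, and $u_3$ has become a root in $F$ since its parent $v'_j$ was removed.

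For the reverse direction, a chain $u_1,\ldots,u_k=v$ in $F$ transfers directly to $T$, and all that remains is the top condition at $u_1$. If $u_1$'s parent in $F$ equals its parent $p$ in $T$, the condition transfers by the preliminary observation. Otherwise $u_1$ is a root in $F$ and has some parent $p \in X$ in $T$; the condition fails to transfer only when $d_T(p) = s_d(2)$ and $u_1$ is the leftmost child of $p$ in $T$. If $p = v_j$, this situation is precluded since the leftmost child of $v_j$ (when $d(v_j)=s_d(2)$) is the removed vertex $v'_j$, which differs from $u_1$. The remaining and main obstacle is $p = v'_j$ with $d_T(v'_j)=s_d(2)$ and $u_1$ its leftmost child: the given chain then fails the top condition in $T$, but the extended chain $v_j, v'_j, u_1, \ldots, u_k = v$ certifies $s_d(2)$-left-odd ancestry in $T$, since its length $k+2$ is even, every intermediate vertex has degree $s_d(2)$ with the next chain vertex as its leftmost child, and $v_j$ is either the root of $T$ or the rightmost (hence not leftmost) child of $v_{j-1}$. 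Recognizing that the lemma requires only the existence of \emph{some} certifying chain, and that the removed pair $(v_j, v'_j)$ fits precisely at the head to restore the $s_d(2)$-left-odd ancestry in $T$, is the conceptual heart of the proof.
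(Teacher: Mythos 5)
Your proof is correct and follows essentially the same strategy as the paper's: classify which vertices of a certifying chain can be deleted in the recursive phase (only the top one or two, and only as a pair $v_j, v'_j$ on or hanging off the right-most path), then transfer the chain between $T$ and $F$, truncating the head by two in the forward direction and, in the one problematic case of the reverse direction, prepending the deleted pair $v_j, v'_j$ to restore the witness. Your explicit structural claim and the observation that surviving vertices keep their children and orderings make the argument slightly more self-contained than the paper's, but the case analysis and the key extension/truncation steps are the same.
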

\begin{proof} 
If $v$ has $s_d(2)$-left-odd ancestry in $T$, then in $T$ we have $v_1,\ldots,v_k=v$, $d(v_i)=s_d(2)$ ($1 \leq i < k$), $v_{i+1}$ a left-most child of $v_i$ ($1\leq i < k$), $k$ even, and $v_1$ not a left-most child of a vertex $w$ with $d(w)=s_d(2)$. If $v_1$ is not deleted then these properties hold for $v$ in $F$ exactly as in $T$.  If $v_1$ is deleted it must be in the right-most path. (As $v_1$ is not a left-most child of a vertex with degree $s_d(2)$, it is not one of the $v'_j$.) Thus $v_2$ is also deleted in the recursive phase, and so $v_3$ is a root in $F$.  The path $v_3,\ldots,v_k=v$ then demonstrates that $v$ has $s_d(2)$-left-odd ancestry in $F$. 

For the converse, suppose $v$ in $F$ has $s_d(2)$-left-odd ancestry. So in $F$ we have a path $P$ on $v_1,\ldots,v_k=v$, where $d(v_i)=s_d(2)$ and $v_{i+1}$ is a left-most child of $v_i$ for $1\leq i < k$, $k$ is even, and $v_1$ is not a left-most child of a vertex $w$ with $d(w)=s_d(2)$.  Let $T'$ be the tree in $F$ containing $v_1$.  If $v_1$ is not the root of $T'$ then $P$ is a witness that $v$ has $s_d(2)$-left-odd ancestry in $T$.  Suppose now that $v_1$ is the root of $T'$.  Let $v_j$ be the ancestor of $v_1$ on the right-most path in $T$ that is closest to $v_1$.  If $d(v_j) = s_d(n)$ with $n >2$ then $v_1$ must be a child of $v_j$ and $P$ is a witness that $v$ has $s_d(2)$-left-odd ancestry in $T$.

Now suppose $d(v_j) = s_d(2)$.  The vertex $v_1$ cannot be $v'_j$ or $v_{j+1}$ as these vertices are deleted.  If $v_1$ is some other child of $v_j$ then $P$ is a witness of $s_d(2)$-left-odd ancestry.  So suppose that $v_1$ is a child of $v'_j$.  If $d(v'_j) = s_d(n)$ for $n > 2$ or if $d(v'_j) = s_d(2)$ and $v_1$ is not the left-most child of $v'_j$ then $P$ is a witness of $s_d(2)$-left-odd ancestry.  The remaining possibility is that $d(v'_j) = s_d(2)$ and $v_1$ is the left-most child of $v'_j$.  In this case $v_j,v'_j,v_1, \ldots, v_k$ is a witness that $v$ has $s_d(2)$-left-odd ancestry, as $v_j$ is on the right-most path so cannot be a left-most child of any vertex. 
\end{proof}

\begin{lemma} \label{lem:algorithm-facts} 
Algorithm \ref{alg:algorithm} has the following properties.
\begin{enumerate}

\item[1.] We have $A(T) = T$ if and only if $T$ is an $R(d)$-good tree.

\item[2.] All non-zero degrees in $A(T)$ are in $R(d)$.

\item[3.] If the input tree $T$ is increasingly (min-first, linearly) ordered then so is the output tree $A(T)$.

\item[4.] We have $A(A(T)) = T$ for all $T$.

\end{enumerate}
\end{lemma}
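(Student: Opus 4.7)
The plan is to prove all four properties simultaneously by strong induction on the number of leaves of $T$, splitting the inductive step according to which branch of Algorithm \ref{alg:algorithm} executes: a contraction in step 1(a), an uncontraction in step 1(b), or the recursive phase in step 2. The base case of a single leaf is vacuous: the right-most path has no non-leaf vertex, the recursive phase leaves an empty forest, $A(T)=T$, and $T$ is trivially $R(d)$-good.

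For parts (2) and (3) the verification in the initial phase is direct bookkeeping. For (2), a contraction at $v_j$ changes its down-degree from $s_d(2)$ to $s_d(2)-1+s_d(n)=s_d(n+1)$; the triggering condition $n\notin b(R)$ together with $n\in R$ and the interval structure of $R$ forces $n+1\in R$, so the new degree lies in $R(d)$. The symmetric computation with $n\notin a(R)$ (forcing $n-1\in R$) covers uncontraction. For (3), in the increasingly ordered case the left-most child $v'_j$ being moved has the smallest $\ell_{\max}$ among the children of $v_j$, which both forces its former descendants to sit to the left of the remaining right-most $d$ children after contraction and licenses the new $v'_j$ to be placed as the left-most child after uncontraction; the min-first case is identical with $\ell_{\min}$, and the linearly ordered case imposes no constraint. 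In the recursive phase, (2) and (3) are immediate from the inductive hypothesis applied to the single modified subtree.

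Part (1) is the characterization of fixed points. The key observation is that any vertex $v_j$ on the right-most path is never a left-most child of anything, so it never has $s_d(2)$-left-odd ancestry; however, if $d(v_j)=s_d(2)$ then its left-most child $v'_j$ does, via the witness $v_j,v'_j$ (the $k=2$ case of Definition \ref{def:rightpath}). Matching these facts against Definition \ref{def:R-good}, the non-triggering of step 1(a) at $v_j$ with $d(v_j)=s_d(2)$ is exactly $R(d)$-goodness at $v'_j$, while the non-triggering of step 1(b) at $v_j$ with $d(v_j)>s_d(2)$ is exactly $R(d)$-goodness at $v_j$; goodness at $v_j$ is automatic when $d(v_j)=s_d(2)$. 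In the recursive phase, Lemma \ref{lemma:characterization} guarantees that $s_d(2)$-left-odd ancestry agrees on $T$ and on the residual forest $F$, so $R(d)$-goodness of a subtree $T'$ of $F$ judged in isolation coincides with $R(d)$-goodness of its vertices inside $T$; the inductive hypothesis then yields the full characterization.

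Part (4) is the main obstacle. The key structural observation is that neither contraction nor uncontraction at $v_j$ disturbs the right-most child of any right-most-path vertex, nor any vertex $v_{j'}$ with $j'<j$ or its left-most child; thus $A(T)$ has the same right-most path as $T$, and the initial phase on $A(T)$ skips $v_1,\ldots,v_{j-1}$ for the same reasons it did on $T$. If $T$ underwent a contraction at $v_j$, the new $v_j$ in $A(T)$ has degree $s_d(n+1)$ with $n+1>2$, and $n\in R\setminus b(R)$ gives $n+1\in R\setminus a(R)$, triggering an uncontraction at $v_j$ that peels off exactly the former children of $v'_j$ into a new vertex and so restores $T$; the reverse direction is symmetric. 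In the recursive case, replacing a single subtree $T'_i$ by $A(T'_i)$ leaves the right-most path, the $\ell_{\max}$ (or $\ell_{\min}$) labels of the surviving roots that determine the traversal order, and the fact that $A$ fixes the earlier subtrees, all unchanged; so $A(T)$ enters its own recursive phase, fixes the first $i-1$ components, and arrives at $A(T'_i)$, where the inductive hypothesis $A(A(T'_i))=T'_i$ closes the argument.
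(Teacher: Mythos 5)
Your proof is correct and takes essentially the same route as the paper's: induction on the number of leaves with the same three-way case split (contraction, uncontraction, recursion), the same degree arithmetic and $\ell_{\max}$/$\ell_{\min}$ bookkeeping for parts 2 and 3, the same appeal to Lemma \ref{lemma:characterization} to transfer $s_d(2)$-left-odd ancestry between $T$ and the residual forest for part 1, and the same ``right-most path unchanged, inverse operation triggered at the same $v_j$'' argument for part 4.
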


\begin{proof}
We prove these statements by induction on $n$, the number of leaves.

The base case of the induction, $n=1$, is trivial, as ${\cal T}^{\rm i.o.}(1)$, ${\cal T}^{\rm m.o.}(1)$, and ${\cal T}^{\rm l.o.}(1)$ each consist of a single $R(d)$-good tree, an isolated root with label $1$, and the algorithm fixes that tree.  Suppose now that $n > 1$. 

We now show item 1.  Suppose that $T$ is an $R(d)$-good tree.  Let $v$ be a non-leaf vertex on the right-most path of $T$.  By definition of $s_d(2)$-left-odd ancestry, $v$ does not have $s_d(2)$-left-odd ancestry and so either $d(v)=s_d(2)$ or $d(v) = s_d(a)$ for some $a \in a(R)$.  If $d(v) = s_d(2)$ the left-most child of $v$ has $s_d(2)$-left-odd ancestry and so is either a leaf or has degree $s_d(b)$ for some $b \in b(R)$.  Therefore Algorithm \ref{alg:algorithm} proceeds to the recursive phase. This removes the vertices in the right-most path, and the left-children of vertices with degree $d(v) = s_d(2)$. This leaves behind a possibly empty forest $F$. Note that a vertex that remains in $F$ has the same down-degree as in $T$, and the property of having $s_d(2)$-left-odd ancestry transfers to vertices in $F$ by Lemma \ref{lemma:characterization}.
So each component tree meets the definition of being $R(d)$-good, and so by induction is fixed by Algorithm \ref{alg:algorithm}.  Therefore tree $T$ is fixed by Algorithm \ref{alg:algorithm}, i.e., $A(T)=T$.

Conversely, suppose that $A(T)=T$.  Then Algorithm \ref{alg:algorithm} proceeds to the recursive phase, and so all non-leaf vertices on the right-most path must have degree $s_d(a)$ for some $a \in a(R)$, or $s_d(2)$ with the left-child a leaf or having degree $s_d(b)$ for some $b \in b(R)$. The deletion leaves a forest $F$, which by hypothesis has $A(T')=T'$ for each component $T'$ of $F$, and so by induction consists of $R(d)$-good trees $T'$.  So by definition of $R(d)$-goodness a vertex $v$ in $F$ is either a leaf or $d(v) = s_d(2)$ or $d(v) = s_d(a)$ for some $a \in a(R)$, unless $v$ has $s_d(2)$-left-odd ancestry in $F$ in which case $d(v) = s_d(b)$ for some $b \in b(R)$.  In the last case Lemma \ref{lemma:characterization} shows that $v$ has $s_d(2)$-left-odd ancestry in $T$.  Combining this with the fact that down-degrees of $v$ in $F$ are the down-degrees of $v$ in $T$, this shows that $T$ is $R(d)$-good. 

We now show items 2 and 3 in the case that $T$ is produced from a contraction at vertex $v_j$ in step 1(a) of the algorithm.  Suppose $d(v'_j) = s_d(n)$.  Since $n \not \in b(R)$, $n+1 \in R$ and $d(v_j) = s_d(n+1) \in R(d)$ in $A(T)$. All other vertices of $A(T)$ are unchanged from $T$, so  $A(T)$ has all down-degrees in $R(d)$ and we have item 2.

We now show item 3.  Let $v''_1, \ldots v''_m$ and $v'_j, w_2, \ldots, w_d, v_{j+1}$ be the ordered lists of children of $v'_j$ and $v_j$ in $T$ respectively. The ordered list of children of $v_j$ in $A(T)$ is $v''_1, \ldots, v''_m, w_2, \ldots, w_d, v_{j+1}$.

If $T$ is increasingly ordered then \[\ell_{\max}(v''_1) < \cdots < \ell_{\max}(v''_m) \mbox{ and } \ell_{\max}(v'_j) < \ell_{\max}(w_2) < \cdots < \ell_{\max}(w_d) < \ell_{\max}(v_{j+1}).\]   Since \[\ell_{\max}(v'_j) = \max(\ell_{\max}(v''_1), \ldots, \ell_{\max}(v''_m)) = \ell_{\max}(v''_m)\] we have \[\ell_{\max}(v''_1) < \cdots < \ell_{\max}(v''_m) < \ell_{\max}(w_2) < \cdots < \ell_{\max}(w_d) < \ell_{\max}(v_{j+1})\] and thus the children of $v_j$ are increasingly ordered in $A(T)$.  Since the orderings of all other children in $A(T)$ are unchanged from their ordering in $T$, $A(T)$ is increasingly ordered.

If $T$ is min-first ordered then $v''_1$ has the smallest $\ell_{\min}$ label amongst $v''_1, \ldots, v''_m$ and $v'_j$ has the smallest $\ell_{\min}$ label amongst $v'_j, w_2, \ldots, w_m, v_{j+1}$. Since in $T$ we have \[\ell_{\min}(v''_1) = \min(\ell_{\min}(v''_1), \ldots, \ell_{\min}(v''_m)) = \ell_{\min}(v'_j),\] $v''_1$ has the smallest $\ell_{\min}$ label amongst the children of $v_j$ in $A(T)$. Thus $A(T)$ is min-first ordered.

There are no restrictions on the linear orderings in a linearly ordered tree so if $T$ is linearly ordered then $A(T)$ is automatically linearly ordered.

We now show items 2 and 3 in the case that $A(T)$ is produced from $T$ by an uncontraction at vertex $v_j$ in step 1(b) of the algorithm. Suppose $d(v_j) = s_d(n)$ with $n>2$.  Since $n \not \in a(R)$, $n-1 \in R$ and, in $A(T)$, $d(v_j) = s_d(n-1) \in R(d)$ and $d(v_j) =s_d(2)$. It follows that $A(T)$ has all down-degrees in $R(d)$, and we have item 2. 

We now show item 3.  Let $v''_1, \ldots v''_m, w_2, \ldots, w_d, v_{j+1}$ be the ordered list of children of $v_j$ in $T$ where $m = s_d(n-1)$. 

If $T$ is increasingly ordered then \[\ell_{\max}(v''_1) < \cdots < \ell_{\max}(v''_m) < \ell_{\max}(w_2) < \cdots < \ell_{\max}(w_d) < \ell_{\max}(v_{j+1}).\]  Thus the children $v''_1, \ldots v''_m$ of $v'_j$ in $A(T)$ are increasingly ordered. Since \[\ell_{\max}(v'_j) = \max(\ell_{\max}(v''_1), \ldots, \ell_{\max}(v''_m)) = \ell_{\max}(v''_m)\] in $A(T)$, the children of $v_j$ are increasingly ordered in $A(T)$: \[\ell_{\max}(v'_j) < \ell_{\max}(w_2) < \cdots < \ell_{\max}(w_d) < \ell_{\max}(v_{j+1}).\]  Thus, as before, $A(T)$ is increasingly ordered.

If $T$ is min-first ordered then $v''_1$ has the smallest $\ell_{\min}$ label amongst $v''_1, \ldots, v''_m, w_2, \ldots, w_d$, and $v_{j+1}$.  Thus the children of $v'_j$ in $A(T)$ are min-first ordered. Since 
$$
\ell_{\min}(v'_j) = \min(\ell_{\min}(v''_1), \ldots, \ell_{\min}(v''_m)) = \ell_{\min}(v''_1),
$$ 
the children of $v_j$ are min-first ordered in $A(T)$ as well and $A(T)$ is min-first ordered.  As before, if $T$ is a linearly ordered tree then $A(T)$ is automatically linearly ordered. 

If $A(T)$ is produced by a contraction/uncontraction at vertex $v_j$ in step 1, we have shown that $A(T)$ has all down-degrees in $R(d)$, and so we can apply Algorithm \ref{alg:algorithm} to $A(T)$. In this case, we will now show that $A(A(T)) = T$.

In $A(T)$ the right-most path is exactly as it was in $T$. Further, for $i < j$  the number of children of $v_i$ remains unchanged from $T$ to $A(T)$, as does the left-most child of $v_i$ and its children. Since this is the data that determines whether a contraction/uncontraction is to be performed at $v_i$, it follows that if the algorithm is applied to $A(T)$, it does not call for contraction/uncontraction at $v_i$ for any $i < j$. However, at $v_j$, if in $T$ we performed a contraction, then the algorithm calls for an uncontraction at $v_j$ in $A(T)$, while if in $T$ we performed an uncontraction, then the algorithm calls for a contraction at $v_j$ in $A(T)$. In either case, we have $A(A(T))=T$, which gives item 4 in this case.

We now suppose that $A(T)$ is produced by step 2, the recursive phase of the algorithm. If $A(T)=T$, the results are immediate.  Therefore we assume that $A(T) \neq T$, and so there is a $T'$ in $F$ with $A(T') \neq T'$, and $A(T)$ is obtained from $T$ by replacing $T'$ with $A(T')$.  Thus by the initial phase and induction, $A(T)$ has all down-degrees in $R(d)$ and remains increasingly (min-first, linearly) ordered. This gives items 2 and 3. 

Finally we show item 4 in the case where $A(T)$ is produced by step 2. The right-most path stays the same from $T$ to $A(T)$.  Every vertex $v_j$ on the path keeps the children in $A(T)$ it had in $T$ and if $d(v_j) = s_d(2)$, then its left-most child $v'_j$ keeps the children in $A(T)$ it had in $T$.  Thus when the algorithm is applied to $A(T)$ it also produces the same forest in the recursive phase. The collection of subtrees examined when applying the algorithm to $A(T)$ is the same one examined when applying the algorithm to $T$, except that $T'$ has become $A(T')$. The ordering on subtrees remains unchanged, so now $A(T')$ is the first component that is not ${R(d)}$-good. By induction $A(A(T'))=T'$, so $A(A(T))=T$.
\end{proof}

\begin{ex}
Let $R=\{1,2\} \cup \{4,5,6\}$ and consider the tree $T$ in Figure \ref{fig-A(T)} below, which was the only tree in Figure \ref{fig-Rgood} that was not $R$-good (here we use linear ordering).  In this case $T$ contracts edge $w_3w_4$ via Algorithm \ref{alg:algorithm} to produce $A(T)$. Notice also that Algorithm \ref{alg:algorithm} applied to $A(T)$ shows $A(A(T))=T$.
\end{ex}

\begin{figure}[ht!]
\begin{center}
\begin{tikzpicture}[scale=.9,every node/.style={scale=0.95}]

\node at (4,5) {$T$:};
\node (b1) at (6,5) [circle,draw,label=180:$w_1$] {};
\node (b2) at (6.5,4) [circle,draw,label=270:$1$] {};
\node (b3) at (5.5,4) [circle,draw,label=180:$w_2$] {};
\node (b4) at (6,3) [circle,draw,label=270:$2$] {};
\node (b5) at (5,3) [circle,draw,label=180:$w_3$] {};
\node (b6) at (5.5,2) [circle,draw,label=270:$3$] {};
\node (b7) at (4.5,2) [circle,draw,label=180:$w_4$] {};
\node (b8) at (5.25,1) [circle,draw,label=270:$4$] {};
\node (b9) at (4.75,1) [circle,draw,label=270:$5$] {};
\node (b10) at (4.25,1) [circle,draw,label=270:$6$] {};
\node (b11) at (3.75,1) [circle,draw,label=270:$7$] {};

\foreach \from/\to in {b1/b2,b1/b3,b3/b4,b3/b5,b5/b6,b5/b7,b7/b8,b7/b9,b7/b10,b7/b11} \draw (\from) -- (\to);

\node at (9,5) {$A(T)$:};
\node (c1) at (11,5) [circle,draw,label=180:$w_1$] {};
\node (c2) at (11.5,4) [circle,draw,label=270:$1$] {};
\node (c3) at (10.5,4) [circle,draw,label=180:$w_2$] {};
\node (c4) at (11,3) [circle,draw,label=270:$2$] {};
\node (c5) at (10,3) [circle,draw,label=180:$w_3$] {};
\node (c6) at (11,2) [circle,draw,label=270:$3$] {};
\node (c7) at (10.5,2) [circle,draw,label=270:$4$] {};
\node (c8) at (10,2) [circle,draw,label=270:$5$] {};
\node (c9) at (9.5,2) [circle,draw,label=270:$6$] {};
\node (c10) at (9,2) [circle,draw,label=270:$7$] {};

\foreach \from/\to in {c1/c2,c1/c3,c3/c4,c3/c5,c5/c6,c5/c7,c5/c8,c5/c9,c5/c10} \draw (\from) -- (\to);
\end{tikzpicture}
\end{center}
\caption{Tree $T$ produces $A(T)$ via Algorithm \ref{alg:algorithm}.}
\label{fig-A(T)}
\end{figure}
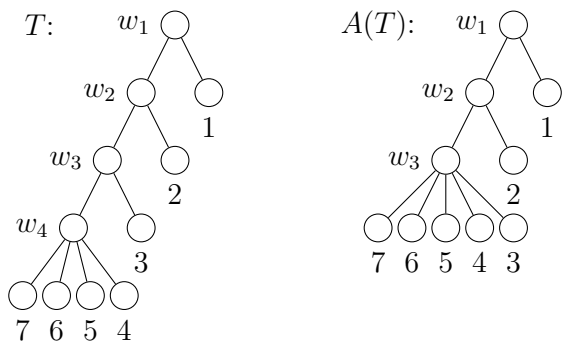

\begin{proof}[Proof of Theorem \ref{thm:single-forest-stretched}]
As noted after the statement of Theorem \ref{thm:single-forest-stretched}, if $\{P_1, \ldots, P_k\}$ is a partition of $[n]$ with part sizes restricted to lie in $R(d)$, then $n = |P_1| + \cdots + |P_k| = s_d(m_1) + \cdots + s_d(m_k) = d(m_1 + \cdots + m_k - k) + k$, so ${n \brace k}_{R(d)} ={n \brack k}_{R(d)}  = L(n,k)_{R(d)} = 0$ if $d \nmid (n-k)$.  Similarly, ${\cal F}_{R(d)}(n,k) = \emptyset$ if $d \nmid (n-k)$. Indeed, let $F$ be a phylogenetic forest with $n$ leaves, $k$ components, and down-degrees in $R(d)$, i.e. $F$ has $m$ non-leaf vertices $v_i$ with $d(v_i) = s_d(n_i)$.  The number of edges of $F$ is $(n+m)-k=\sum_{i=1}^m \left( d(n_i-1)+1 \right)$, giving $n-k=d(n_1+\cdots+n_m-m)$. For this reason in the sequel we only consider triples $(d,n,k)$ with $d \mid n-k$.

If $T$ has internal sequence $(n_i)_{i=1}^m$ then, by case $k=1$ of the edge count of $F$ in the previous paragraph, it has $m = -(n-1)/d + \sum_{i=1}^m n_i$ internal vertices.  
Thus, from Lemma \ref{lem:Lagrange-inversion}, we have
$$
b_n = (-1)^{(n-1)/d} \sum_{T \in {\cal T}(n)} (-1)^{\sum_{i=1}^m n_i} \prod_{i=1}^m a_{d(v_i)}
$$ 
where $v_1, \ldots, v_m$ is the set of non-leaf vertices of the index tree $T$ in the summation.

We begin with the first statement of Theorem \ref{thm:single-forest-stretched}.  
If $a_n = {\bf 1}_{\{n \in R(d)\}}$ then by the method of Theorem \ref{thm:forest-difference} we have 
$$
b_n = b_{n,1} = (-1)^{(n-1)/d}(|{\cal T}^{\rm i.o., even}_{R(d)}(n)| - |{\cal T}^{\rm i.o., odd}_{R(d)}(n)|).
$$
Indeed, since $a_n = {\bf 1}_{\{n \in R(d)\}}$, the $\prod_{i=1}^m a_{d(v_i)}$ factor of the summand is the number of ways of turning the index tree $T$, a properly labeled phylogenetic tree with unordered children, into an increasingly ordered tree with down-degrees in $R(d)$.  The $(-1)^{\sum_{i=1}^m n_i}$ factor of the summand ensures even trees are counted positively and odd trees negatively.

When $A(T) \neq T$ the internal sequence of $A(T)$ is obtained from the internal sequence of $T$ by replacing a pair of indices $n_i,n_j$ with a single entry $n_i + n_j - 1$ or vice versa.  Thus $A(T)$ is even when $T$ is odd and vice versa.  Since $A(T)  = T$ if and only if $T$ is $R(d)$-good and since $R(d)$-good trees are even we get 
$$
b_n = (-1)^{(n-1)/d}|{\cal T}^{\rm i.o., good}_{R(d)}(n)|.
$$  
Using that $b_n$ is alternating along the arithmetic progression $\{1,d+1,2d+1,\ldots\}$ it is easy to check that in this case the sign of all the summands on the right-hand side of equation (\ref{eq:building-bnk}) is $(-1)^{(n-k)/d}$, and so 
$$
b_{n,k} = (-1)^{(n-k)/d}|{\cal F}^{\rm i.o., good}_{R(d)}(n,k)|,
$$ 
as claimed.

For the second statement of Theorem \ref{thm:single-forest-stretched}, if we take $a_n = (n-1)!{\bf 1}_{\{n \in R(d)\}}$ then we get 
$$
b_n = (-1)^{(n-1)/d}(|{\cal T}^{\rm m.o., even}_{R(d)}(n)| - |{\cal T}^{\rm m.o., odd}_{R(d)}(n)|) =(-1)^{(n-1)/d}|{\cal T}^{\rm m.o., good}_{R(d)}(n)|
$$ and $b_{n,k} = (-1)^{(n-k)/d}|{\cal F}^{\rm m.o., good}_{R(d)}(n,k)|$.  
Similarly, if we take $a_n = n!{\bf 1}_{\{n \in R(d)\}}$ then we get 
$$
b_n = (-1)^{(n-1)/d}(|{\cal T}^{\rm l.o., even}_{R(d)}(n)| - |{\cal T}^{\rm l.o., odd}_{R(d)}(n)|) =(-1)^{(n-1)/d}|{\cal T}^{\rm l.o., good}_{R(d)}(n)|
$$ 
and $b_{n,k} = (-1)^{(n-k)/d}|{\cal F}^{\rm l.o., good}_{R(d)}(n,k)|$.
\end{proof}

\begin{proof}[Proof of Corollary \ref{cor:special-cases}]
This easily follows from the definitions and theorems indicated.
\end{proof}

\subsection{Proof of Theorem \ref{thm:Whitney}}

It is clear that $\sigma$ is covered by $\tau$ in $\Pi^{1,d}_n$ if and only if some part of $\tau$ is the union of $d+1$ parts of $\sigma$ and every other part of $\sigma$ is a part of $\tau$.  Thus $\Pi_n^{1,d}$ is ranked, the partitions at rank $k$ are precisely the partitions with $n-kd$ parts, and $W_k(\Pi_n^{1,d}) = {n \brace n- kd}_{\{1, d+1, 2d+1, \ldots\}}$. 

Suppose $\sigma = \{\sigma_1, \ldots, \sigma_k\}$ and $\tau = \{\tau_1, \ldots, \tau_\ell\}$ are partitions in $\Pi_n^{1,d}$.  If $\sigma \leq \tau$, i.e. $\sigma$ is a refinement of $\tau$, 
let $\epsilon = \epsilon(\sigma,\tau) = \{\epsilon_1, \ldots, \epsilon_\ell\}$ be the unique partition of $[k]$ such that for all $1 \leq i \leq \ell$, $\tau_i = \bigcup_{j \in \epsilon_i} \sigma_j$.  We have $\epsilon \in \Pi_k^{1,d}$.  Indeed, since $|\tau_i|  = \sum_{j \in \epsilon_i} |\sigma_j|$,  $|\epsilon_i| \equiv \sum_{j \in \epsilon_i} 1 \equiv \sum_{j \in \epsilon_i} |\sigma_j| \equiv |\tau_i| \equiv 1 \pmod{d}$ for all $i$.

Fix $\sigma \in \Pi_n^{1,d}$ with $k$ parts and let $P =[\sigma, \infty) = \{ \tau \in \Pi_n^{1,d} | \tau \geq \sigma\}$.  We have $\tau \leq \tau'$ in $P$ if and only if $\epsilon(\sigma,\tau) \leq \epsilon(\sigma,\tau')$ in $\Pi_k^{1,d}$.  Thus $[\sigma,\infty)$ is isomorphic to $\Pi_{|\sigma|}^{1,d}$ via the isomorphism $f(\tau) = \epsilon(\sigma,\tau)$. Since the isomorphism type of $[\sigma,\infty)$ depends only on the number of parts of $\sigma$, this type also depends only on the rank of $\sigma$, i.e. $\Pi_n^{1,d}$ is uniform.

We set $W_z(\cdot) = 0$ for non-integer and negative values of $z$ so that $W_{(n-k)/d}(\Pi_n^{1,d}) = {n \brace k}_{\{1, d+1, 2d+1, \ldots\}}$ for all $n,k \geq 1$.  We also set $w_z(\cdot)=0$ for non-integer or negative values of $z$ and show \[ [w_{(n-k)/d}(\Pi_n^{1,d})]_{n,k \geq 1} = \left[{n \brace k}_{\{1 ,d+1, 2d+1, \ldots\}} \right]_{n,k \geq 1}^{-1}.\]  This will prove $w_{(n-k)/d}(\Pi_n^{1,d}) = {n \brace k}^{-1}_{\{1,d+1, 2d+1, \ldots\}}$ or $w_k(\Pi_n^{1,d}) = {n \brace n - kd}^{-1}_{\{1, d+1, 2d+1, \ldots\}}$ as desired.

Let \[S(n,\ell) = \sum_k {n \brace k}_{\{1 , d+1, 2d+1, \ldots\}} w_{(k-\ell)/d}(\Pi_k^{1,d}) = \sum_k W_{(n-k)/d}(\Pi^{1,d}_n) w_{(k-\ell)/d}(\Pi_k^{1,d})\]

We have to show that $S(n,\ell)= {\bf 1}_{\{n=\ell\}}$, for all $n, \ell \geq 1$.  Since the summand in $S(n,\ell)$ is $0$ unless $\ell \leq k \leq n$, we have $S(n,\ell) = 0$ for $\ell > n$.  Clearly if $\ell = n$, $S(n, \ell) = 1$.  We now suppose that $\ell < n$.  The summand is also $0$ unless $k \equiv \ell \pmod{d}$ and $k \equiv n \pmod{d}$.  So $S(n, \ell)=0$ if $n \not \equiv \ell\pmod{d}$. Suppose now that $n \equiv \ell \pmod{d}$.  We may restrict the index of summation to those $k$ for which $k = n - j d$ for some integer $j \geq 0$. (All other terms are $0$.) Fix $j_0$ so that $\ell = n - j_0 d$ and reindex the summation by $j$.  Then \[S(n,\ell) = \sum_{j=0}^{j_0} W_j(\Pi_n^{1,d}) w_{j_0-j}(\Pi_{n-j d}^{1,d}). \]

For $n \geq 1$ let $\zeta = \zeta_n$ and $\mu = \mu_n$ be the zeta and M{\"o}bius functions of $\Pi_n^{1,d}$, i.e. $\zeta(\sigma,\tau) = {\bf 1}_{\{\sigma \leq \tau\}}$ for all $\sigma, \tau \in \Pi_n^{1,d}$ and $[\mu(\sigma,\tau)]_{\sigma,\tau \in \Pi^{1,d}_n} = [\zeta(\sigma,\tau)]_{\sigma,\tau \in \Pi^{1,d}_n}^{-1}$ (see \cite{StanleyEC1}).  Let $\rho_n$ be the rank function of $\Pi_n^{1,d}$, i.e. $\rho_n(\sigma) = (n-k)/d$ where $k$ is the number of parts of $\sigma$.  Let $0_n$ be the unique minimal element of $\Pi_n^{1,d}$, i.e. the partition of $[n]$ into singletons.  

Fix $j$ with $0 \leq j \leq j_0$.  We have \[W_{j}(\Pi_n^{1,d}) = \sum_{\sigma \in \Pi^{1,d}_n, \rho_n(\sigma) = j} \zeta(0_n,\sigma)\] and by definition \[w_{j_0-j}(\Pi_{n-j d}^{1,d}) = \sum_{\epsilon \in \Pi^{1,d}_{n - j d}, \rho_{n - j d}(\epsilon) = j_0 - j} \mu_{n- j d}(0_{n-j d},\epsilon).\]  Fix an element $\sigma$ in $\Pi_n^{1,d}$ with $\rho_n(\sigma) = j$, i.e. with $k = n - j d$ parts.  Then $[\sigma,\infty)$ is isomorphic to $\Pi_{n - j d}^{1,d}$ and so $\mu_{n-j d}(0_{n - j d},\epsilon(\sigma,\tau)) = \mu_n(\sigma,\tau)$ and $\rho_{n- j d}(\epsilon(\sigma,\tau)) = \rho_n(\tau) -\rho_n(\sigma)$ for all $\tau \geq \sigma$.  Thus
\[w_{j_0-j}(\Pi_{n-j d}^{1,d}) = \sum_{\tau \in [\sigma,\infty), \rho_n(\tau) = j_0} \mu_n(\sigma,\tau)\]
and
\[S(n,\ell) = \sum_{j=0}^{j_0} \sum_{\sigma \in \Pi^{1,d}_n, \rho_n(\sigma) = j} \zeta_n(0_n,\sigma) \sum_{\tau \in [\sigma,\infty), \rho_n(\tau) = j_0} \mu_n(\sigma,\tau)\]
\[ = \sum_{\tau \in \Pi_n^{1,d}, \rho_n(\tau) = j_0}  \sum_{\sigma \in [0_n, \tau]} \zeta_n(0_n,\sigma)\mu_n(\sigma,\tau).\]
For each $\tau$ in the summation the inner summation is $0$ as $\mu$ and $\zeta$ are inverses.  

\begin{remark}
This proof was inspired by Exercise 3-130 of \cite{StanleyEC1} (which in turn generalizes Theorem 6 of \cite{Dowling}). The statement in \cite{StanleyEC1} only covers uniform, ranked posets with a $0$ and a $1$, leaving out $\Pi_n^{1,d}$ with $n \not \equiv 1 \pmod{d}$.
\end{remark}

\section{Discussion and some open questions} \label{sec:conclusion}

For $R \subseteq {\mathbb N}$ with $1 \in R$ and with no exposed odds, it is straightforward to enumerate $R$-good and $R(d)$-good increasingly ordered, min-first ordered and linearly ordered trees by number of leaves. Indeed, from our results we have the following for all such $R$; here we use the notation $[x^n/n!]f(x)$ to denote the coefficient of $x^n/n!$ in the Taylor series of $f(x)$, and recall that $f^{-1}(x)$ denotes the compositional inverse or series reversion of $f(x)$.  
\begin{itemize}
\item The number of $R$-good increasingly ordered trees with $n$ leaves is $(-1)^{n-1}[x^n/n!]f_1^{-1}(x)$ where $f_1(x)=\sum_{k \in R} x^k/k!$, and 
\item the number of $R(d)$-good increasingly ordered trees with $d(n-1)+1$ leaves is 
$$
(-1)^{n-1}\left[\frac{x^{d(n-1)+1}}{(d(n-1)+1)!}\right]f_2^{-1}(x)
$$ 
where $f_2(x)=\sum_{k \in R} x^{d(k-1)+1}/(d(k-1)+1)!$.  
\end{itemize}
The same holds for min-first ordered trees, with 
$$
f_1(x)=\sum_{k \in R} \frac{x^k}{k}, \quad f_2(x)=\sum_{k \in R} \frac{x^{d(k-1)+1}}{d(k-1)+1},
$$
and for linearly ordered trees, with 
$$f_1(x)=\sum_{k \in R} x^k, \quad f_2(x)=\sum_{k \in R} x^{d(k-1)+1}.$$

For example, the series reversion of $f(x)=x+x^2/2$ is $$f^{-1}(x)=\sum_{n \geq 1} (-1)^{n-1}\frac{(2n-3)!!x^n}{n!}$$ (where $m!!=m(m-2)(m-4)\ldots $ is the double factorial), and so the sequence of both $[2]$-good increasingly ordered trees and $[2]$-good min-first ordered trees is 
$(1, 1, 3, 15, 105, 945, 10395,\ldots)$ \cite[A001147]{sloan}, while the series reversion of $g(x)=x+x^2$ is $$g^{-1}(x)=\sum_{n \geq 1} (-1)^{n-1}\frac{(2n-2)!x^n}{(n-1)!n!},$$ and so the sequence of $[2]$-good linearly ordered trees is 
$(1, 2, 12, 120, 1680, 30240, 665280, \ldots)$ \cite[A001813]{sloan}.

Another interesting example relates to the following special functions. For $d \geq 1$ the {\em hyperbolic function of order $d$ of the first kind} (see for example \cite{Ungar}) is the function $H_{d,1}(x)$ defined by the power series 
$$
H_{d,1}(x) = \sum_{n \geq 1} \frac{x^{d(n-1)+1}}{(d(n-1)+1)!}; 
$$
so for example $H_{1,1}(x)=e^x-1$ and $H_{2,1}(x)=\sinh x$. The study of these functions goes back to the mid-1700's. As an immediate by-product of Theorem \ref{thm:single-forest-stretched} and Theorem \ref{thm:Whitney} we obtain combinatorial interpretations for the coefficients of the compositional inverses of these functions and their connection to Whitney numbers of the poset $\Pi_{d(n-1)+1}^{1,d}$. 
\begin{cor} \label{cor-hyperbolic-AAA}
For $d \geq 1$, let $h_{d,1}(x)$ be the compositional inverse of $H_{d,1}(x)$ (satisfying $h_{d,1}(H_{d,1}(x))) = H_{d,1}(h_{d,1}(x)))=x$ for all $x$). Then writing $h_{d,1}(x)$ in the form
$$
h_{d,1}(x) = \sum_{n \geq 1} (-1)^{n-1}h_n \frac{x^{d(n-1)+1}}{(d(n-1)+1)!}
$$
we have  
\begin{enumerate}
	\item[(a)]$h_n$ is the number of increasingly ordered trees with $d(n-1)+1$ leaves that are ${\mathbb N}(d)$-good, i.e. have all vertices of degree $d+1$ or $0$ and all left-most children of degree $0$; and
    \item[(b)] $w_{n-1}(\Pi_{d(n-1)+1}^{1,d}) = (-1)^{n-1} h_n$,  i.e., the Whitney numbers of the first kind of the poset $\Pi_{d(n-1)+1}^{1,d}$ are the coefficients of the exponential generating function of the compositional inverse of $H_{d,1}(x)$.
\end{enumerate}
\end{cor}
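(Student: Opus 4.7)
The plan is to recognize the corollary as a direct specialization of Theorem \ref{thm:single-forest-stretched} and Theorem \ref{thm:Whitney} to the case $R = \mathbb{N}$, using Lemma \ref{lem:Riordan} to identify $h_{d,1}(x)$ with the first column of the relevant inverse matrix. First I would verify that $R = \mathbb{N}$ has no exposed odds (trivially, since every positive integer lies in $\mathbb{N}$), and observe that $R(d) = \mathbb{N}(d) = \{1, d+1, 2d+1, \ldots\}$. Since $\mathbb{N} = [1, \infty)$ is a single maximal interval whose sole left endpoint $1$ is excluded and which has no right endpoint, we have $a(\mathbb{N}) = b(\mathbb{N}) = \emptyset$. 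Since $3 \in \mathbb{N}$, Claim \ref{claim:3inR} applies and $\mathbb{N}(d)$-goodness reduces to the clean condition that every vertex has degree $0$ or $d+1 = s_d(2)$ and every left-most child of a (necessarily degree $d+1$) non-leaf vertex is a leaf, matching the description in part (a).

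Next I would take $a_m = \mathbf{1}_{\{m \in \mathbb{N}(d)\}}$, so its exponential generating function is exactly $a(x) = H_{d,1}(x)$, and by Lemma \ref{lem:Riordan} we have $A_a = [{n \brace k}_{\mathbb{N}(d)}]_{n,k \geq 1}$, whose inverse $A_a^{-1}$ has first column given by the coefficient sequence of $a^{-1}(x) = h_{d,1}(x)$. Thus for every $N \geq 1$,
$$
\left[\frac{x^N}{N!}\right] h_{d,1}(x) \;=\; {N \brace 1}^{-1}_{\mathbb{N}(d)}.
$$
Applying Theorem \ref{thm:single-forest-stretched} with $k = 1$ and $N = d(n-1)+1$, we get
$$
{d(n-1)+1 \brace 1}^{-1}_{\mathbb{N}(d)} \;=\; (-1)^{n-1}\,\bigl|\mathcal{T}^{\rm i.o., good}_{\mathbb{N}(d)}(d(n-1)+1)\bigr|,
$$
while ${N \brace 1}^{-1}_{\mathbb{N}(d)} = 0$ for $N$ not of the form $d(n-1)+1$ (as noted after the statement of Theorem \ref{thm:single-forest-stretched}, since $d \nmid N-1$ forces the count to vanish). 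Matching this against the given expansion of $h_{d,1}(x)$ identifies $h_n = |\mathcal{T}^{\rm i.o., good}_{\mathbb{N}(d)}(d(n-1)+1)|$, establishing part (a).

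For part (b), I would plug $n' = d(n-1)+1$ and $k' = n-1$ into Theorem \ref{thm:Whitney}. Then $n' - k'd = 1$, so
$$
w_{n-1}\bigl(\Pi_{d(n-1)+1}^{1,d}\bigr) \;=\; {d(n-1)+1 \brace 1}^{-1}_{\{1, d+1, 2d+1, \ldots\}} \;=\; (-1)^{n-1} h_n,
$$
by the identification just obtained. This gives the closed-form interpretation of the Whitney numbers of the first kind as the signed coefficients of the compositional inverse of $H_{d,1}$.

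There is no real obstacle here: the corollary is pure bookkeeping once one matches the exponential generating function $H_{d,1}(x)$ against the setup of Lemma \ref{lem:Riordan}. The only point that warrants care is tracking the sign $(-1)^{(N-1)/d}$ in Theorem \ref{thm:single-forest-stretched} and confirming that it collapses to $(-1)^{n-1}$ under the substitution $N = d(n-1)+1$, and that the vanishing of ${N \brace 1}^{-1}_{\mathbb{N}(d)}$ for $N \not\equiv 1 \pmod d$ is precisely what renders $h_{d,1}(x)$ a power series supported on the arithmetic progression $\{d(n-1)+1 : n \geq 1\}$, consistent with its definition as the inverse of $H_{d,1}$.
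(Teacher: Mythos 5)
Your proposal is correct and follows exactly the route the paper intends: the corollary is presented there as an immediate consequence of Theorem \ref{thm:single-forest-stretched} (with $R=\mathbb{N}$, $k=1$) and Theorem \ref{thm:Whitney}, with Lemma \ref{lem:Riordan} identifying the first column of the inverse matrix with the coefficients of $h_{d,1}(x)=H_{d,1}^{-1}(x)$. Your verification that $\mathbb{N}$ has no exposed odds, that $a(\mathbb{N})=b(\mathbb{N})=\emptyset$ collapses $\mathbb{N}(d)$-goodness to the stated degree condition, and that the sign $(-1)^{(N-1)/d}$ becomes $(-1)^{n-1}$ is exactly the bookkeeping the paper leaves implicit.
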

As discussed after Definition \ref{def:R-good}, there are $(n-1)!$ ${\mathbb N}(1)$-good increasingly ordered trees with $n$ leaves, and indeed the compositional inverse of $H_{1,1}(x)=e^x-1$ is $\log(1+x) =\sum_{n \geq 1} (-1)^{n-1} (n-1)!x^n/n!$.

For $d=2$ the sequence $n$th term is the number of ${\mathbb N}(2)$-good increasingly ordered trees with $2(n-1)+1$ leaves begins $(1, 1, 9, 225, 11025, 893025, 108056025,\ldots)$, and is the sequence of squares of double factorials of odd numbers \cite[A001818]{sloan}. It is well-known that this sequence arises in the power series of the inverse of the hyperbolic sine function. For $d=3$ it begins $(1,1,34,5446, 2405116, 2261938588, 3887833883752, \ldots)$; this sequence does not appear in \cite{sloan}. 

\medskip

We have given combinatorial interpretations for each each of ${n \brace k}^{-1}_{R}$, ${n \brack k}^{-1}_{R}$ and $L(n,k)^{-1}_{R}$ for all ${R}$ with $1 \in {R}$, but for many ${R}$ these interpretations are as the difference in cardinalities of two sets of forests. Only for $R$ and $R(d)$ with $1 \in R$ and with no exposed odds can we interpret the inverse entries as counts of single sets of forests. In all of these special cases we have the crucial property that the compositional inverses of $\sum_{n \in R} x^n/n!$, $\sum_{n \in R} x^n/n$, $\sum_{n \in R} x^n$, $\sum_{n \in R} x^{d(n-1)+1}/(d(n-1)+1)!$, $\sum_{n \in R} x^{d(n-1)+1}/(d(n-1)+1)$ and $\sum_{n \in R} x^{d(n-1)+1}$ each have alternating coefficient sequences (in the latter three cases, alternating along an arithmetic progression). Here we say that a series $\sum_{n \geq 1} c_n x^n$ with $c_1 > 0$ is \emph{alternating} if $(-1)^{n-1}c_n \geq 0$ for all $n \geq 1$; it is alternating along the arithmetic progression $A=\{1,d+1,2d+1,\ldots\}$ if $c_n = 0$ for all $n \not \in A$ and if $(-1)^{k}c_{kd+1} \geq 0$ for all $k \geq 0$.

This raises a number of natural questions. 

\begin{question} \label{quest:characterize}
Can we characterize those $R \subseteq {\mathbb N}$ with $1 \in R$ for which the compositional inverse of $\sum_{n \in R} x^n/n!$ ($\sum_{n \in R} x^n/n$, $\sum_{n \in R} x^n$) has an alternating coefficient sequence or one alternating along an arithmetic progression starting at $1$?
\end{question}

\begin{question} \label{quest:combinatorics}
For those $R$, is there an analog of Algorithm \ref{alg:algorithm} that furnishes a combinatorial interpretation of the numbers ${n \brace k}^{-1}_{R}$, etc.?
\end{question}

In the case of $\sum_{n \in R} x^n$, we can say definitively that the characterization sought in Question \ref{quest:characterize} is not simply having no exposed odds. Let $f(x)$ be a power series with $\ord(f(x))=1$ and with a positive coefficient of $x$. In what follows we say that a series $\sum_{n \geq 0} c_n x^n$ with $c_0 > 0$ is \emph{alternating} if $(-1)^nc_n \geq 0$ for all $n \geq 0$.
\begin{claim} \label{clm:suff}
A sufficient condition for the compositional inverse $f^{-1}(x)$ of $f(x)$ to be alternating is that $x/f(x)$ is alternating.
\end{claim}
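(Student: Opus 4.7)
My plan is to prove the claim via the Lagrange inversion formula combined with the substitution $y \mapsto -y$. Setting $\phi(y) := y/f(y)$, the hypothesis is that $\phi$ has constant term $c_0 = 1/f_1 > 0$ (where $f_1 = [x]f(x)$) and that $(-1)^n [y^n]\phi(y) \geq 0$ for all $n \geq 0$. The identity $f(x) = x/\phi(x)$ rewrites as $f^{-1}(x) = x \cdot \phi(f^{-1}(x))$, so Lagrange inversion in its standard form yields
\[ [x^n] f^{-1}(x) = \frac{1}{n} [y^{n-1}] \phi(y)^n \qquad \text{for all } n \geq 1. \]

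The next step is to introduce $\psi(y) := \phi(-y)$. By the alternating hypothesis on $\phi$, the series $\psi$ has non-negative coefficients, and therefore so does every power $\psi(y)^n$; in particular $[y^{n-1}]\psi(y)^n \geq 0$. Since $\phi(y)^n = \psi(-y)^n$, we have $[y^{n-1}]\phi(y)^n = (-1)^{n-1} [y^{n-1}]\psi(y)^n$, so
\[ (-1)^{n-1}[x^n] f^{-1}(x) \;=\; \frac{1}{n}[y^{n-1}]\psi(y)^n \;\geq\; 0. \]
Combined with $[x]f^{-1}(x) = 1/f_1 > 0$, this is exactly the statement that $f^{-1}(x)$ is alternating in the paper's sense.

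I expect no real obstacle: the claim is essentially an immediate corollary of Lagrange inversion together with the trivial fact that non-negativity of coefficients is preserved under formal power series multiplication. The only mild care required is to quote the correct form of Lagrange inversion --- the one recovering $[x^n]f^{-1}$ from the coefficients of powers of $y/f(y)$ --- rather than one of its numerous equivalent formulations. It is also worth remarking that the same argument applies verbatim, with only cosmetic changes, to establish the analogue along an arithmetic progression $\{1, d+1, 2d+1, \ldots\}$: one applies Lagrange inversion to the substitution $f(x) = x/\phi(x^d)$ when $f$ is supported on such a progression, and the sign-flip $y \mapsto -y$ on the relevant variable produces the same conclusion.
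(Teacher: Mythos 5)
Your proof is correct and follows essentially the same route as the paper: both apply Lagrange inversion in the form $[x^n]f^{-1}(x) = \tfrac{1}{n}[y^{n-1}](y/f(y))^n$ and then observe that powers of an alternating series with positive constant term remain alternating (you phrase this via the substitution $y \mapsto -y$, the paper via closure of alternating series under multiplication, but these are the same observation). No gaps; the concluding remark about arithmetic progressions is extra but harmless.
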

\begin{proof}
Since the product of alternating power series with positive constant terms is again alternating with positive constant term, under the hypothesis of the claim we get that for all $n \geq 1$ the power series of $(x/f(x))^n$ is alternating. The Lagrange inversion formula (see e.g. \cite[Chapter 5]{StanleyEC2}), which says that for all $n$ the coefficient of $x^n$ in $f^{-1}(x)$ is the same as $(1/n)$ times the coefficient of $x^{n-1}$ in $(x/f(x))^n$, then says that the sign of the coefficient of $x^n$ in $f^{-1}(x)$ is $(-1)^{n-1}$ or $0$.
\end{proof}

This is not a terribly useful test for the power series that come up when studying restricted Stirling numbers, but it is quite useful for restricted Lah numbers, where the series under consideration take the form $f(x)=\sum_{n \in R} x^n$, and the geometric series can sometimes be used to find an explicit expression for the coefficients of the power series of $x/f(x)$. For example, when $R=\{1,2,r+1,r+2\}$ for $r \geq 2$, we have 
\begin{eqnarray*}
\frac{x}{x+x^2+x^{r+1}+x^{r+2}} & = & \frac{1}{(1+x)(1+x^r)} \\
& = & \left\{ \begin{array}{ll}
\sum_{k=1}^{\infty} (-1)^{k-1}k \sum_{j=0}^{r-1} (-1)^{j}x^{(k-1)r+j} & \mbox{if $r$ odd} \\
\sum_{k=1}^{\infty} \sum_{j=0}^{r-1} (-1)^{j}x^{2(k-1)r+j} & \mbox{if $r$ even},
\end{array}
\right.
\end{eqnarray*}
which is alternating. This shows that $L(n,k)_R^{-1}$ has sign $(-1)^{n-k}$ (or $0$) for all $n, k \geq 1$, whenever $R$ is of the form $\{1,2,r+1,r+2\}$ for $r \geq 2$; but only in the case $r=2$ is this a set $R$ with $1 \in R$ and with no exposed odds.

There is some computational evidence in favor of an affirmative answer to the following question, but perhaps not enough to merit forming a conjecture.

\begin{question} \label{quest:three-musketeers}
Is it the case that for $R \subseteq {\mathbb N}$ with $1 \in R$, we have that the inverse of $\sum_{n \in R} x^n/n!$ is alternating if and only if  the inverse of $\sum_{n \in R} x^n/n$ is alternating and if and only if the inverse of $\sum_{n \in R} x^n$ is alternating?
\end{question}

In light of the discussion after Question \ref{quest:combinatorics}, it is worth noting that the compositional inverses of both $x+x^2/2+x^4/24+x^5/120$ and $x+x^2/2+x^4/4+x^5/5$ are alternating for their first 1200 terms. 

We have shown in this paper, by a {\em combinatorial} argument (Algorithm \ref{alg:algorithm}) that if $R \subseteq {\mathbb N}$ with $1 \in R$ has no exposed odds, then $f(x) = \sum_{n \in R} x^n/n!$, $g(x) = \sum_{n \in R} x^n/n$ and $h(x) = \sum_{n \in R} x^n$ have compositional inverses with alternating coefficient sequences.  In \cite[Section 5]{EGS-draft} we also show $h(x) = \sum_{n \in R} x^n$ has an alternating inverse by a different combinatorial argument expressing inverse Lah numbers in terms of Dyck paths.  There we also showed analytically that $x/h(x)$ is alternating.  Together with Claim \ref{clm:suff} this gives an analytical proof that $h^{-1}(x)$ is alternating.  

This leads us to the following non-combinatorial question: are there analytical proofs that $f^{-1}(x)$ and $g^{-1}(x)$ are alternating?  We do not even know of an analytical way of showing, for example, that $x+x^2/2+x^3/3+x^4/4$, the degree four Taylor approximation to $\log(1+x)$, has alternating compositional inverse (note that $x/(x+x^2/2+x^3/3+x^4/4)$ does not have an alternating power series, so we cannot apply Claim \ref{clm:suff}).

\section{Acknowledgements}

The second author thanks Hannah Porter and David Radnell for helpful discussions. The authors thank Patricia Hersh for helpful discussions.

\end{document}